\documentclass{amsart}

\usepackage[english]{babel}
\usepackage[utf8]{inputenc}
\usepackage{amsmath,amsthm}
\usepackage{amsfonts}
\usepackage{graphicx}
\usepackage[colorinlistoftodos]{todonotes}
\usepackage{hyperref}
\usepackage{subfig}

\newtheorem{proposition}{Proposition}
\newtheorem{definition}{Definition}

\theoremstyle{definition}
\newtheorem{example}{Example}

\newtheorem{theorem}{Theorem}

\newtheorem{corollary}{Corollary}

\newcommand{\X}[1]{\ensuremath{\Xi^{[#1]}}}

\title[Hausdorff Dimension of Fibonacci Fractals]{Hausdorff Dimension of Generalized Fibonacci Word Fractals}

\author{Tyler Hoffman and Benjamin Steinhurst}
\address[T. Hoffman and B. Steinhurst]{Department of Mathematics and Computer Science, McDaniel College, Westminster MD 21157}

\date{\today}

\begin{document}

\begin{abstract}
Fibonacci word fractals are a class of fractals that have been studied recently, though the word they are generated from is more widely studied in combinatorics. The Fibonacci word can be used to draw a curve which possesses self-similarities determined by the recursive structure of the word. The Hausdorff dimension of the scaling limit of the finite Fibonacci word curves is computed for $i$-Fibonacci curves and any drawing angle between $0$ and $\pi/2$.
\end{abstract}

\maketitle

\section{Introduction}

Motivated by the work of Monnerot-Dumaine \cite{MonnFrac}, we analyze a generalized class of Fibonacci fractals for their Hausdorff dimension. The two parameters we consider are the drawing angle as introduced by Monnerot-Dumaine and the use of the $i-$Fibonacci words as introduced by Ram\'irez et al \cite{RamirezRubianoMathematica,RamirezBiperiodic,Ramirez2014}. The necessity of this analysis is that the dimensions estimated in the previous works are for the self-similarity dimension despite occasional references to Hausdorff dimension. The equality of these two dimensions often holds but is not automatic. Equality holds in the familiar case when there exists an iterated function system satisfying the open set condition whose attractor is the fractal in question. Many of the already mentioned papers and those of Blondin-Mass\'e \cite{BlondinMasse2011} also include a study of the Fibonacci snowflake fractal which we do not consider in the present paper. 

The basic construction of the Fibonacci fractals is based on three operations: one is a recursive construction of a string of symbols, the second is the processing of a finite string into a curve, and the third is taking a scaling limit of the finite curves to a fractal limit. The construction of the recursively defined words is what gives the name Fibonacci to these fractals. Fibonacci words, as strings of $0's$ and $1's$ have been extensively studied since at least the 1980's. See for a few examples \cite{Berstel1985,Chuan1992}. The processing of the finite words into curves is inspired, though not identical, to a similar process in $L-$systems \cite{MonnFrac}.

Much of the notation will look similar, but mean very different things. Any given finite Fibonacci word is written with notation $f_n^{[i]}$. More on what $i$ and $n$ represent later. A finite Fibonacci curve is denoted as $\mathcal{F}_n^{[i]}$. A Fibonacci number itself is shown as $F_n^{[i]}$. It should be noted the difference in the case of the letter and the font in determining the meaning of a symbol. Additionally we denote the fractal associated to an $i-$Fibonacci word with drawing angle $\alpha$ as $\mathcal{F}^{[i]}_{\alpha}$.

We begin by first stating the definitions of the Fibonacci and i-Fibonacci words along with some of their particularly useful properties (Section \ref{sec:fibwords}). We then draw the corresponding curves and explore their properties as well (Section \ref{sec:fibcurves}). Finally we demonstrate the existence of the fractals and most importantly calculate the Hausdorff dimension of any given i-Fibonacci fractal with drawing angle $0 \le \alpha\le\frac{\pi}{2}$ (Section \ref{sec:fibfrac}). The final theorem of the paper will be the continuity of the fractal construction as a function of the drawing angle, Proposition \ref{prop:angleDimH} and Theorem \ref{thm:FracConverge}.

\subsection*{Acknowledgements}
We thank Zachary Littrell for his aid in reviewing our code and Spencer Hamblen for his reoccurring timely assistance. McDaniel College kindly provided support for this project through a Student-Faculty Collaboration Grant.

\section{Fibonacci Words}\label{sec:fibwords}

The (classical) $n'th$ Fibonacci word is the word $f_n$ defined over the alphabet $\{0,1\}$ that is generated by the concatenation rule $f_n=f_{n-1}f_{n-2}$ with initial values $f_0=1$ and $f_1=0$. Later we will discuss the infinite word $f$. More information on the sequence can be found in the On-line Encyclopedia of Integer Sequences for sequence number A003849 \cite{oeisA003849}. Since the first $F_n$ digits of $f_m$ $m > n$ are fixed it is possible to define $f_{\infty}$ as the infinite Fibonacci word. We however show the existence of $f_{\infty}$ metrically in Proposition \ref{prop:infty}.

The $i-$Fibonacci word is constructed according the the same rules as a classical Fibonacci word however the initial values are $f_1=0$ and $f_2=0^{i-1}1$. We use $0^{i-1}$ to represent a string of $0$'s of length $i-1$.  Table \ref{tab:Fibwords} lists the first five $i-$Fibonacci words for $n=2$, the standard Fibonacci words, and $n=3.$ 

\begin{table}[t]
\caption{}
\begin{center}
\begin{tabular}{|c|c|c|}\hline
n & $f_n^{[2]}$ & $f_n^{[3]}$ \\ \hline
1 &0 & 0 \\ \hline
2 &01 &001\\ \hline
3 &010 &0010\\ \hline
4 &01001&0010001\\ \hline
5 &01001010&00100010010\\ \hline
\end{tabular}
\end{center}
\label{tab:Fibwords}
\end{table}%

While concatenation is the basic construction technique for Fibonacci words we will define a substitution rule that will produce $f_n^{[i]}$ from $f_1^{[i]}$ as well. The main utility is that the application of a substitution rule is a local operation while concatenation is global operation. This perspective will be more useful in the discussion of the curves in the next section. It should be noted that while our $\X{i}$ substitution is similar to the $\sigma$ substitution used in \cite{MonnFrac}, it is not the same. 

\begin{definition}
Define \X{i} to be the substitution rule: 
$$\X{i}: \left\{\begin{array}{l} 0\to 0^{i-1}1 \\ 0^{i-1}1\to 0^{i-1}10 \end{array} \right.$$
over words comprised of blocks of the form $0$ and $0^{i-1}1$.
\end{definition}

Since $\X{i}$ transforms a sequence of blocks $0$ and $0^{i-1}1$ into another sequence of blocks of $0$ and $0^{i-1}1$ the transformation can be iterated. 

\begin{proposition}
Let $f^{[i]}_{n}$ be an $i-$Fibonacci word then $\X{i}(f^{[i]}_n) = f^{[i]}_{n+1}$.
\end{proposition}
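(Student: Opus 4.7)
The plan is to proceed by induction on $n$, where the main content is verifying that $\Xi^{[i]}$ behaves as a morphism when applied to the $i$-Fibonacci words, i.e.\ that $\Xi^{[i]}(f_n^{[i]} f_{n-1}^{[i]}) = \Xi^{[i]}(f_n^{[i]}) \, \Xi^{[i]}(f_{n-1}^{[i]})$. The base cases are direct: $\Xi^{[i]}(f_1^{[i]}) = \Xi^{[i]}(0) = 0^{i-1}1 = f_2^{[i]}$, and $\Xi^{[i]}(f_2^{[i]}) = \Xi^{[i]}(0^{i-1}1) = 0^{i-1}10 = f_2^{[i]} f_1^{[i]} = f_3^{[i]}$. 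Given the block-morphism property and the concatenation rule $f_{n+1}^{[i]} = f_n^{[i]} f_{n-1}^{[i]}$, the inductive step is then the one-line computation $\Xi^{[i]}(f_{n+1}^{[i]}) = \Xi^{[i]}(f_n^{[i]}) \Xi^{[i]}(f_{n-1}^{[i]}) = f_{n+1}^{[i]} f_n^{[i]} = f_{n+2}^{[i]}$.

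The main obstacle is justifying the morphism property, since $\Xi^{[i]}$ is defined only on words expressible as concatenations of the blocks $0$ and $0^{i-1}1$, and the block-decomposition of a concatenation need not be the concatenation of the block-decompositions unless the boundary behaves well. To handle this I would prove, as an auxiliary induction, two simultaneous facts about each $f_n^{[i]}$ with $n \geq 2$: first, it admits a (necessarily unique) decomposition into such blocks; second, it begins with the block $0^{i-1}1$, and it ends with the block $0^{i-1}1$ when $n$ is even and with a lone block $0$ (preceded by a $0^{i-1}1$ block) when $n$ is odd.

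With these structural facts in hand, the morphism property is immediate: when we concatenate $f_n^{[i]}$ and $f_{n-1}^{[i]}$, the last block of $f_n^{[i]}$ is either $0^{i-1}1$ or a lone $0$, and the first block of $f_{n-1}^{[i]}$ is $0^{i-1}1$, so the blocks on either side of the seam do not merge or split under the unique decomposition rule. Hence the block-decomposition of $f_n^{[i]} f_{n-1}^{[i]}$ is literally the concatenation of the individual decompositions, and $\Xi^{[i]}$, defined block-by-block, distributes over the concatenation.

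The only subtlety to watch is the odd-$n$ case, where $f_n^{[i]}$ ends with a $0$: one must check that this trailing $0$ does not combine with the leading $0^{i-1}$ of $f_{n-1}^{[i]}$ to produce a different block-decomposition than the one inherited from the two factors. This follows from the auxiliary induction, which shows that the trailing block is a single lone $0$ (not multiple consecutive zeros), so the scan that reads off blocks from left to right reassembles exactly the expected blocks on each side of the seam.
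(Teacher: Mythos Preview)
Your proof is correct and follows essentially the same inductive approach as the paper: verify the base cases directly, then use the concatenation rule $f_{n}^{[i]} = f_{n-1}^{[i]} f_{n-2}^{[i]}$ together with the block-local action of $\Xi^{[i]}$ to split $\Xi^{[i]}(f_{n}^{[i]})$ as $\Xi^{[i]}(f_{n-1}^{[i]})\Xi^{[i]}(f_{n-2}^{[i]})$. The extra care you take in justifying the morphism property at the seam is exactly the point the paper addresses by the convention, stated just before the proof, that $f_n^{[i]}$ is always regarded as carrying the block decomposition it inherits from the construction; your auxiliary induction makes this convention rigorous rather than assumed.
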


Since $f^{[i]}_1$ is unambiguous in how it is written as a single block of $0$ we will always assume that $f^{[i]}_n$ is broken down into these blocks in the way that it arises as the output of this process. 

\begin{proof}
We proceed by induction. Note that $f^{[i]}_1 = 0$ and $\X{i}(0) = 0^{i-1}1 = f^{[i]}_2$. Similarly that $\X{i}(f^{[i]}_1) = f^{[i]}_2$ can be checked manually. Suppose the proposition holds for $n-1$ and $n-2$ and we will show it holds for $n$. Since $\X{i}$ acts locally by replacing blocks and $f_n = f_{n-1}f_{n-2}$ can be split into two sub-collections of blocks the action of $\X{i}$ can be split over $f_{n-1}$ and $f_{n-2}$ so that 
$$\X{i}(f^{[i]}_n) = \X{i}(f^{[i]}_{n-1}f^{[i]}_{n-2}) = \X{i}(f^{[i]}_{n-1})\X{i}(f^{[i]}_{n-2}) = f^{[i]}_{n}f^{[i]}_{n-1} = f^{[i]}_{n+1}.$$
\end{proof}

The following properties of the $i-$Fibonacci words will be useful later. They are cited here without proof.

\begin{proposition}\label{prop:wordprops}
\cite[Proposition 5]{Ramirez2014} The following properties hold for all i-Fibonacci words $f_n^{[i]}$.
\begin{enumerate}
\item The subword "11"  can never be found in an i-Fibonacci word with $i\ge 2$.
\item The word $f_n^{[i]}$ can be decomposed as $p_nab$ where $p_n$ is a palindrome and $ab$ depend on the parity of $n$ only.
\item For even $n$, $ab=10$ and for odd $n$, $ab=01$
\item The i-Fibonacci word has a five-partite structure 
$$f_n^{[i]}=f_{n-3}^{[i]}f_{n-3}^{[i]}f_{n-6}^{[i]}l_{n-3}^{[i]}l_{n-3}^{[i]},$$
where $l_{n}^{[i]}=p_n^{[i]}ba$ (i.e. the last two letters are swapped).
\end{enumerate}
\end{proposition}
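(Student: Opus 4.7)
The plan is to establish each of the four properties by induction on $n$, leveraging the recursion $f_n^{[i]} = f_{n-1}^{[i]} f_{n-2}^{[i]}$ together with the block structure guaranteed by the substitution rule $\X{i}$ introduced above.

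For (1), the cleanest route is via the substitution description: every $i$-Fibonacci word with $i \ge 2$ is a concatenation of the blocks $0$ and $0^{i-1}1$, and each such block begins with the symbol $0$. Consequently two $1$'s can never meet at a block boundary, and within a single block at most one $1$ appears, so the subword $11$ cannot occur.

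For (2) and (3), I would run a joint induction, tracking a single invariant: $f_n^{[i]} = p_n^{[i]} a_n b_n$ with $p_n^{[i]}$ palindromic and $a_n b_n \in \{01, 10\}$ determined by the parity of $n$. After verifying the base cases $f_1^{[i]}$ and $f_2^{[i]}$ directly (for $f_2^{[i]} = 0^{i-1}1$ one takes $p_2 = 0^{i-2}$ and $a_2 b_2 = 01$), the inductive step reduces to the claim that if $f_{n-1}^{[i]} = p_{n-1} a b$ and $f_{n-2}^{[i]} = p_{n-2} a' b'$ with $\{ab, a'b'\} = \{01, 10\}$, then $p_{n-1} a b p_{n-2}$ is itself a palindrome. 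I would verify this by reading the concatenated word in reverse and matching it symbol-by-symbol to the forward reading, exploiting the fact that $p_{n-1}$ contains $p_{n-2}$ as a prefix (inherited from $f_{n-1} = f_{n-2} f_{n-3}$) and that the extra letters $ab$ are exactly the reverse of $a' b'$.

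For (4), I would first unfold the recursion three times,
\[f_n^{[i]} = f_{n-1}^{[i]} f_{n-2}^{[i]} = f_{n-2}^{[i]} f_{n-3}^{[i]} f_{n-3}^{[i]} f_{n-4}^{[i]} = f_{n-3}^{[i]} f_{n-4}^{[i]} f_{n-3}^{[i]} f_{n-3}^{[i]} f_{n-4}^{[i]},\]
so the desired identity reduces to $f_{n-4}^{[i]} f_{n-3}^{[i]} f_{n-3}^{[i]} f_{n-4}^{[i]} = f_{n-3}^{[i]} f_{n-6}^{[i]} l_{n-3}^{[i]} l_{n-3}^{[i]}$. To prove this, I would expand $f_{n-3}^{[i]} = f_{n-4}^{[i]} f_{n-5}^{[i]}$ and $f_{n-5}^{[i]} = f_{n-6}^{[i]} f_{n-7}^{[i]}$ to realign factors, then apply Property (2) to convert each trailing $f_{n-3}^{[i]} = p_{n-3}^{[i]} ab$ into $l_{n-3}^{[i]} = p_{n-3}^{[i]} ba$ by absorbing the nearby terminal letters of the neighboring factor. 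A length check against $F_n = 4F_{n-3} + F_{n-6}$, which reduces to the standard identity $F_{n-4} - F_{n-5} = F_{n-6}$, serves as a sanity verification. The main obstacle is precisely this bookkeeping: one must simultaneously track three things — the palindrome/suffix decomposition from (2), the parity-driven choice of $ab$ from (3), and the overlap pattern of successive Fibonacci factors — and I expect a split into cases based on the parity of $n$ to be the cleanest way to keep the symbol swaps honest.
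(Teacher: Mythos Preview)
The paper does not prove this proposition at all: it is explicitly introduced with the sentence ``They are cited here without proof,'' and the result is attributed to \cite[Proposition 5]{Ramirez2014}. So there is no proof in the paper to compare your proposal against.

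Your outline is nonetheless a reasonable sketch of how such a proof goes in the cited literature: item (1) via the block/substitution description, items (2)--(3) by a joint induction on the palindrome-plus-two-letter decomposition, and item (4) by unfolding the recursion and using (2)--(3) to swap trailing letters. One point to clean up: your base case for (2)--(3) has $a_2b_2 = 01$ with $n=2$ even, which directly contradicts the parity convention stated in (3). Checking the table of words in the paper ($f_2^{[2]}=01$, $f_3^{[2]}=010$, $f_4^{[2]}=01001$, \ldots) shows that the actual pattern is $ab=01$ for even $n$ and $ab=10$ for odd $n$, so the parity in the stated proposition appears to be reversed. You should flag this and work with the corrected convention, since your inductive step depends on knowing exactly which of $ab$ and $a'b'$ is $01$ and which is $10$. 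The rest of your plan is sound, though in (4) the phrase ``absorbing the nearby terminal letters of the neighboring factor'' is doing a lot of work; when you write it out you will need the identity $f_{n-1}^{[i]}f_{n-2}^{[i]} = l_{n-2}^{[i]}l_{n-1}^{[i]}$ (or an equivalent commutation-up-to-swap statement), which follows from (2) but deserves to be stated and checked explicitly.
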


\begin{proposition}\label{prop:infty}
The infinite i-Fibonacci words exist for any given i $ \ge 2$ and are $f^{[i]}=\lim_{n\to\infty}f_n^{[i]}$.
\end{proposition}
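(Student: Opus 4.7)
The plan is to realize the sequence $(f_n^{[i]})_{n\geq 1}$ as a Cauchy sequence in a complete metric space of binary strings and invoke completeness.

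First, I would work in the space $\{0,1\}^{\mathbb{N}}$ of infinite binary sequences, embedding each finite word $f_n^{[i]}$ by appending an infinite string of $0$'s. Equipping this space with the standard prefix ultrametric, in which $d(u,v) = 2^{-k}$ when $k$ is the length of the longest common prefix of $u$ and $v$ (and $d(u,u)=0$), yields a complete metric space; this is a classical fact, as the space is homeomorphic to the Cantor set.

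Next, I would verify that $f_n^{[i]}$ is a prefix of $f_m^{[i]}$ for all $m \ge n \ge 2$. This follows immediately by induction from the defining recursion $f_{k+1}^{[i]} = f_k^{[i]} f_{k-1}^{[i]}$: if $f_n^{[i]}$ is a prefix of $f_k^{[i]}$, then it is also a prefix of $f_k^{[i]} f_{k-1}^{[i]} = f_{k+1}^{[i]}$. Hence, after the $0^{\infty}$ padding, the embeddings of $f_n^{[i]}$ and $f_m^{[i]}$ agree on at least the first $|f_n^{[i]}| = F_n^{[i]}$ symbols, so
\[ d\!\left(f_n^{[i]}, f_m^{[i]}\right) \le 2^{-F_n^{[i]}}. \]

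Since the $i$-Fibonacci numbers $F_n^{[i]}$ satisfy $F_{n+1}^{[i]} = F_n^{[i]} + F_{n-1}^{[i]}$ and are strictly increasing to $+\infty$, the upper bound tends to $0$ as $n \to \infty$, so $(f_n^{[i]})$ is Cauchy and the limit $f^{[i]} = \lim_{n\to\infty} f_n^{[i]}$ exists as an infinite word by completeness. There is no real obstacle here; the only mild subtlety is choosing a padding so the finite words live in a single complete space, but any fixed choice (or working directly with the analogous prefix metric on the disjoint union of finite and infinite words) suffices.
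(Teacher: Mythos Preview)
Your proof is correct and follows essentially the same route as the paper: both use the prefix (``$2$-adic'') metric on words, observe from the concatenation rule $f_{n+1}^{[i]}=f_n^{[i]}f_{n-1}^{[i]}$ that each $f_n^{[i]}$ is a prefix of all later words, and conclude that the sequence is Cauchy in a complete space. Your version is in fact more explicit than the paper's---you spell out the inductive prefix argument, the bound $d\le 2^{-F_n^{[i]}}$, and the device of $0^{\infty}$-padding (the paper simply works in the space of finite and infinite words with the same metric).
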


\begin{proof}
The limit is taken in the sense of the standard $2-$adic metric on the space of finite and infinite words. That is $d_2(w,v) = 2^{-n}$ where $n$ is the number of initial symbols that $w$ and $v$ share. By the definition of $f_n^{[i]}$ as concatenations, for each $i$ the sequence is Cauchy in this metric. Thus there is a limit word that is denoted $f^{[i]}$
\end{proof}

It was mentioned earlier that the infinite Fibonacci words could also be considered as fixed points of the concatenation rule. This metric proof of the existence is equivalent to that argument by the definition of the metric measuring distance between two words by measuring the length of the longest shared prefix of those two words. 

\section{Fibonacci Curves}\label{sec:fibcurves}

The Fibonacci word curve $\mathcal{F}^{[i]}_n$ is generated by taking the corresponding Fibonacci word $f^{[i]}_n$ and applying the following drawing rule. The Fibonacci curves introduced in \cite{MonnFrac} are just the case when $i=2$. 

{\bf Drawing Rule:}
 Let $a_j$ be the $j$'th character of $f_n$ and perform the following procedure over each character $a_j$.
\begin{enumerate}
\item Set initial direction, $a(\emptyset)=\frac{\pi}{2}$
\item Choose drawing angle $\alpha$.
\item Draw a segment in the current direction.
\item If $a_j$ is 0 then
\begin{itemize}
	\item If $j$ is even, turn left, i.e. add $\alpha$ to the current direction. 
    \item If $j$ is odd, turn right, i.e. subtract $\alpha$ from the current direction.
\end{itemize}
\item If $a_j$ is 1, do not change direction.
\item Repeat from step 3 to step 5 for $j=1$ to $j = F^{[i]}_n.$
\end{enumerate}

It should be noted that the instructions ``turn right'' or ``turn left'' mean to change the angle at which the next segment will be drawn. When the end of a word is reached by the drawing rule, there will be an angle pointing to where the next segment would be drawn and this final direction will be called the net angle of a word.

\begin{definition}
Let $a(w)$ be a function on finite words on the alphabet $\{0,1\}$. Set $a(\emptyset) = \pi/2$, this value is the ``initial angle.'' Then $a(wb) = a(w)+\psi$ where 
\begin{itemize}
	\item $\psi=0$ if $b=1$,
	\item $\psi=1$ if $b=0$ and $|wb|$ is even,
	\item $\psi=1$ if $b=0$ and $|wb|$ is odd.
\end{itemize}
\end{definition}

Following from the properties of the i-Fibonacci words are the corresponding properties for the curves. 

\begin{proposition}\label{prop:iCurveProps}
\cite[Proposition 6]{Ramirez2014} The following properties hold for all curves $\mathcal{F}_n^{[i]}$.
\begin{enumerate}
\item There exist only segments of length 1 or 2 in the curve. 
\item The curve $\mathcal{F}_n^{[i]}$ is similar to the curve $\mathcal{F}_{n-3}^{[i]}$ with the same shape, but different number of segments.
\item The scaling factor between curves $\mathcal{F}_n^{[i]}$ and $\mathcal{F}_{n-3}^{[i]}$ is $1+\sqrt{2}$.
\item Similar to the above word property, the curve $\mathcal{F}_n^{[i]}$ has a five-partite structure written
$$\mathcal{F}_n^{[i]}=\mathcal{F}_{n-3}^{[i]}\mathcal{F}_{n-3}^{[i]}\mathcal{F}_{n-6}^{[i]}\mathcal{F'}_{n-3}^{[i]}\mathcal{F'}_{n-3}^{[i]}$$
where $\mathcal{F'}_n^{[i]}$ is the curve corresponding to the word $l_n^{[i]}$.
\end{enumerate}
\end{proposition}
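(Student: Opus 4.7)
The plan is to derive each of the four items from the parallel word-level facts in Proposition~\ref{prop:wordprops}, using that the drawing rule acts locally: the piece of curve drawn for a subword $w$ of $f_n^{[i]}$ depends only on $w$, on the direction in which drawing enters $w$, and on the parity of the absolute index at which $w$ begins inside $f_n^{[i]}$.

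Item (1) is the cleanest. First I would reinterpret the drawing rule so that each character produces one unit segment and only a ``$0$'' is followed by a turn. A maximal straight portion of the resulting curve then has length $1+k$, where $k$ is the length of a maximal run of consecutive ``$1$''s in $f_n^{[i]}$. Proposition~\ref{prop:wordprops}(1) forces $k\le 1$, so the maximal straight portions have length $1$ or $2$.

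For item (4), I would feed the decomposition $f_n^{[i]} = f_{n-3}^{[i]} f_{n-3}^{[i]} f_{n-6}^{[i]} l_{n-3}^{[i]} l_{n-3}^{[i]}$ from Proposition~\ref{prop:wordprops}(4) into the drawing rule block by block. Two bookkeeping subtleties need care. First, because the turn direction at a ``$0$'' depends on the parity of the global index, I would track inductively on $n$ the parities of the five block-start indices, using the Fibonacci recurrence on the lengths $F_n^{[i]}$. Second, the direction at which drawing enters each block equals the accumulated net angle from the earlier blocks; I would compute this using the palindromic structure of $p_{n-3}^{[i]}$ given by Proposition~\ref{prop:wordprops}(2) together with the fixed two-letter tails specified in Proposition~\ref{prop:wordprops}(3). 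The fact that $l_{n-3}^{[i]}$ differs from $f_{n-3}^{[i]}$ only by a swap of the final two letters is precisely the modification needed for the last two sub-curves to attach as claimed.

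With (4) in hand, items (2) and (3) reduce to planar geometry. Applying the five-partite structure one level deeper shows that the five-block assembly and the single curve $\mathcal{F}_{n-3}^{[i]}$ share the same connectivity graph and the same net angle, which gives the similarity asserted in (2). The scaling factor in (3) is then the ratio of linear extents of the assembly to that of a single $\mathcal{F}_{n-3}^{[i]}$, and a short planar-trigonometry computation with the drawing angle implicit in the statement produces $1+\sqrt{2}$. The main obstacle will be item (4): the simultaneous bookkeeping of parities of block-start indices and of accumulated net angles across the five pieces is the most delicate step, and it is where the palindromic and tail-swap content of Proposition~\ref{prop:wordprops}(2)--(3) does its real work.
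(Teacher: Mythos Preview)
The paper does not prove this proposition; it is imported verbatim from \cite[Proposition 6]{Ramirez2014} and stated without argument, just as Proposition~\ref{prop:wordprops} is. So there is no ``paper's own proof'' to compare against.

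Your outline is a reasonable reconstruction of how such a proof would go, and item~(1) is fully correct as written. For item~(4) your plan is the right one, and you have correctly identified the two delicate bookkeeping issues (parity of block-start indices and accumulated net angle); be aware that carrying these through rigorously is more tedious than your sketch suggests, since the parity pattern of the $F_n^{[i]}$ depends on $i\bmod 2$ and you will need to treat both cases. For item~(2), your phrasing ``share the same connectivity graph and the same net angle'' is not quite what similarity means here; what you really need is an inductive step showing that if $\mathcal{F}_{n-3}^{[i]}$ and $\mathcal{F}_{n-6}^{[i]}$ are similar, then the five-partite assembly for $\mathcal{F}_n^{[i]}$ is similar to $\mathcal{F}_{n-3}^{[i]}$. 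For item~(3), note that the value $1+\sqrt{2}$ is specific to the drawing angle $\alpha=\pi/2$; the paper generalizes this in Theorem~\ref{thm:angleScale}, and the argument you allude to is exactly the trapezoid computation given there, specialized to $\cos(\alpha)=0$. You should make that dependence explicit, since as stated Proposition~\ref{prop:iCurveProps}(3) is silently assuming $\alpha=\pi/2$.
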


It is assumed that the lengths of the line segments is ``unit length.'' In Section \ref{sec:fibfrac}  scaling ratios between $\mathcal{F}_n^{[i]}$ and $\mathcal{F}_{n-3}^{[i]}$ will be determined. They will depend on $i$, $n$, and also $\alpha$. The scaling ratios will then allow us to take a scaling limit to obtain a Fibonacci fractal. For the time being though, the specific length of the line segments is left unspecified.

Next we want a function that determines what the end behavior of any given subword is in terms of its angle. 

\begin{proposition}\label{prop:6angle}
Let $w$ be a word composed of the blocks $0$ and $0^{i-1}1$. Then $a(w) = - a({\X{i}}^{\circ 3}(w)).$ Furthermore, $a(w) = a({\X{i}}^{\circ 6}(w))$.
\end{proposition}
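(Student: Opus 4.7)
The first identity will be proved by induction on the number of blocks in the block decomposition of $w$; the second identity then follows as an immediate corollary. The base cases are the two single-block words. One computes ${\X{i}}^{\circ 3}(0) = (0^{i-1}1)(0)(0^{i-1}1)$, a sequence of three blocks of total length $2i+1$, and ${\X{i}}^{\circ 3}(0^{i-1}1) = (0^{i-1}1)(0)(0^{i-1}1)(0^{i-1}1)(0)$, a sequence of five blocks of total length $3i+2$. Substituting into the definition of $a$ and summing the per-character contributions dictated by the drawing rule reduces each base case to a finite check, with a natural case split on the parity of $i$ governing which $0$'s fall at even versus odd absolute positions.

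For the inductive step, I use that $\X{i}$ acts locally on blocks, so ${\X{i}}^{\circ 3}(uv) = {\X{i}}^{\circ 3}(u) \cdot {\X{i}}^{\circ 3}(v)$ whenever $u$ and $v$ are both block-words. Then $a(uv)$ decomposes as $a(u)$ plus the contribution of $v$ shifted by $|u|$ positions, and analogously for $a({\X{i}}^{\circ 3}(uv))$. For the two shifted contributions to combine correctly under the inductive hypothesis on $u$ and $v$, the two shifts must have the same parity. The essential arithmetic observation is that $|{\X{i}}^{\circ 3}(b)| \equiv |b| \pmod{2}$ for each single block $b$: $|{\X{i}}^{\circ 3}(0)| = 2i+1$ and $|0| = 1$ are both odd, while $|{\X{i}}^{\circ 3}(0^{i-1}1)| - |0^{i-1}1| = (3i+2) - i = 2i+2$ is even. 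Hence the parity of the accumulated length through $u$ is preserved under ${\X{i}}^{\circ 3}$, and the inductive hypotheses on $u$ and $v$ combine to give the claim for $uv$.

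The main obstacle I foresee is the careful bookkeeping of signs, since each $0$ contributes $\pm\alpha$ according to the parity of its absolute position in the full word, which is a global attribute. I would package this by introducing an auxiliary function $\delta(w, p)$ equal to the total angle change of $w$ under the assumption that its first symbol occupies a position of parity $p$. The drawing rule immediately yields $\delta(w, \mathrm{odd}) = -\delta(w, \mathrm{even})$, since flipping every absolute parity flips every $\pm\alpha$ contribution while leaving the $1$'s untouched; the position tracking then reduces to flipping $p$ after each odd-length concatenation. Finally, the second identity follows from the first applied twice: $a(w) = -a({\X{i}}^{\circ 3}(w)) = -\bigl(-a({\X{i}}^{\circ 6}(w))\bigr) = a({\X{i}}^{\circ 6}(w))$.
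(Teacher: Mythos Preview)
Your approach is essentially the same as the paper's: compute ${\X{i}}^{\circ 3}$ on the two atomic blocks, verify the sign flip there directly, and then extend to arbitrary block-words by the locality of $\X{i}$ and additivity of the angle over concatenation. The paper's proof is terser and simply invokes ``additivity of the angle function over concatenation,'' whereas you make this step rigorous by observing that $|{\X{i}}^{\circ 3}(b)| \equiv |b| \pmod 2$ for each block $b$, so the parity shift governing the sign of each $0$'s contribution is preserved under ${\X{i}}^{\circ 3}$; this is exactly the bookkeeping the paper suppresses, and your $\delta(w,p)$ device is a clean way to package it.
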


\begin{proof}
Consider the third substitution on each of two basic blocks: 
\begin{align*}
	(\X{i})^3(0)&=0^{i-1}100^{i-1}1\\ 
    (\X{i})^3(0^{i-1}1) &=0^{i-1}100^{i-1}10^{i-1}10.
\end{align*}
Thus we can see by processing the words according to the drawing rule that $a({\X{i}}^{\circ 3}(0) = -a(0)$. Similarly $a({\X{i}}^{\circ 3}(0^{i-1}1)) = -a(0^{i-1}1)$. By the additivity of the angle function over concatenation of words when \X{i} is applied another three times to the sign of the overall angle is again negated. Thus $a({\X{i}}^{\circ 6}(0)) = a(0)$ and the same for $0^{i-1}1$ as well. 
\end{proof}

From Table \ref{fig:FibCurvesIN} it is clear that as $n$ increases the curve $\mathcal{F}_n^{[i]}$ pass through several global shapes. It is clear that their local structure is similar but for the global structure to be the same and for the orientation of the curve to be the same again it is necessary to consider only when $n\ \mod 6 = k$. Notice that Proposition \ref{prop:6angle} gives another reason for considering every sixth curve as well. For purely historical reasons from \cite{MonnFrac} we consider $k=5$ for even $i$. For odd $i$ we consider $k=3$ as it is visually the most similar. Equivalent analyses could be conducted for any choice of $k$.  The following proposition is the first that makes reference to this choice. 

\begin{proposition}\label{prop:curveAngle}
When $i$ is even 
$a\left(f_{6k+5}^{[i]}\right)=-\alpha$ and when $i$ is odd $a\left(f_{6k+3}^{[i]}\right)=0$. 
\end{proposition}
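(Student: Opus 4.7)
The plan is to induct on $k$, using the $6$-periodicity of $a$ under $\X{i}$ furnished by Proposition \ref{prop:6angle} to handle the inductive step, so that the entire proof reduces to verifying the base case $k=0$ by direct computation.

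For the base case, I would write out the relevant words explicitly. From the recursion $f_n^{[i]}=f_{n-1}^{[i]}f_{n-2}^{[i]}$ with $f_1^{[i]}=0$ and $f_2^{[i]}=0^{i-1}1$, one obtains
\[
f_3^{[i]}=0^{i-1}10, \qquad f_5^{[i]}=0^{i-1}10^{i}10^{i-1}10.
\]
Applying the drawing rule character by character, each $0$ contributes $\pm\alpha$ according to the parity of its position and each $1$ contributes nothing. Within any maximal run of consecutive $0$'s the adjacent contributions cancel in pairs, so the sum collapses to a small number of boundary terms. Separating cases by the parity of $i$ (which determines the parities of the positions of the $1$'s and of the final $0$), one reads off $a(f_3^{[i]})=0$ when $i$ is odd and $a(f_5^{[i]})=-\alpha$ when $i$ is even.

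For the inductive step, suppose the claim holds for some $k\ge 0$. Iterating the identity $\X{i}(f_n^{[i]})=f_{n+1}^{[i]}$ gives $\X{i}^{\circ 6}(f_n^{[i]})=f_{n+6}^{[i]}$. Each $f_n^{[i]}$ is a word in the blocks $0$ and $0^{i-1}1$, since it arises from $f_1^{[i]}=0$ by repeated application of $\X{i}$, so Proposition \ref{prop:6angle} applies and yields $a(f_{n+6}^{[i]})=a(f_n^{[i]})$. Taking $n=6k+3$ in the odd-$i$ case and $n=6k+5$ in the even-$i$ case closes the induction.

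The main obstacle is the parity bookkeeping in the base case: $f_5^{[i]}$ has length $3i+2$, and the sign of each turn depends on whether its position is even or odd, which itself depends on $i$. Handling the two parities of $i$ separately is unavoidable here, and is precisely why the statement of the proposition splits on the parity of $i$ with different offsets. Once this base case is established, the inductive step is essentially free from Proposition \ref{prop:6angle}.
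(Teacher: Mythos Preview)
Your proof takes essentially the same approach as the paper: reduce to the base case via the $6$-periodicity $a(f_{n+6}^{[i]})=a(f_n^{[i]})$ coming from Proposition~\ref{prop:6angle} together with $\X{i}^{\circ 6}(f_n^{[i]})=f_{n+6}^{[i]}$, and then verify $a(f_3^{[i]})$ and $a(f_5^{[i]})$ directly. The paper simply asserts the base-case values, whereas you spell out the cancellation argument in the runs of $0$'s; and where you phrase the reduction as an induction on $k$, the paper states the periodicity once and applies it, but these are purely presentational differences.
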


\begin{proof}
By Proposition \ref{prop:6angle} $a(f^{[i]}_n) = a(f^{[i]}_{n+6})$. So $a\left(f_{6k+5}^{[i]}\right)=a(f^{[i]}_5) = -\alpha$ for even $i$ and $a\left(f_{6k+3}^{[i]}\right)=a(f^{[i]}_3) = 0$ for odd $i$.
\end{proof}

Finally, we verify that there exists no overlap between any two blocks of the five-partite structure for a later proof of Proposition \ref{prop:OSCsatisfied}.

\begin{definition}
The bounding box of a given curve $\mathcal{F}_n^{[i]}$ is the smallest rectangle enclosing the curve. 
\end{definition}

\begin{example}
Refer to Figure \ref{fig:BoundBox} depicting $\mathcal{F}_{11}^{[2]}$ where the dotted red rectangle is the bounding box formed by vertices A, B, C, and D.

\begin{figure}[t]
\begin{center}
\includegraphics[width=3in]{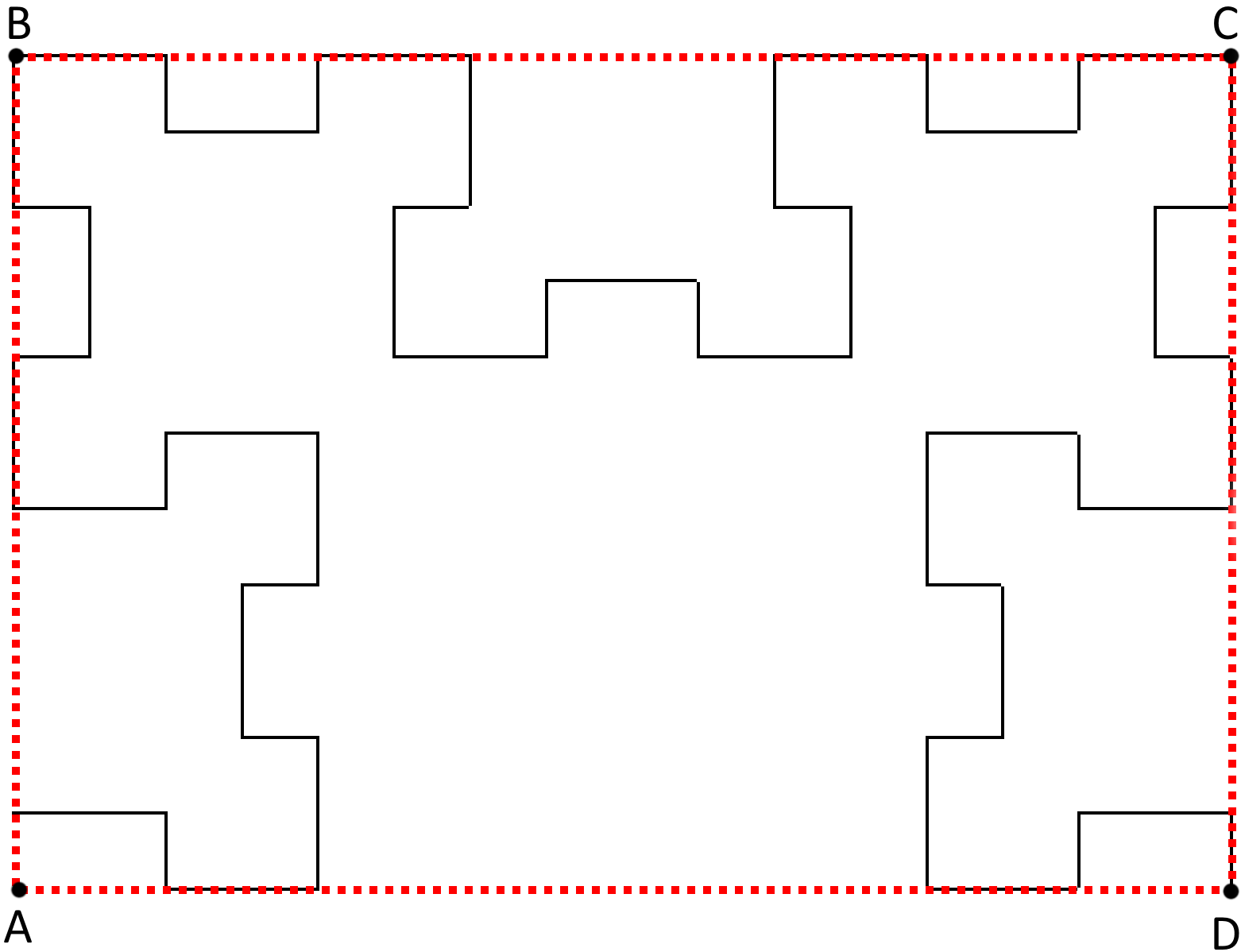}
\caption{An example of a bounding box for $\mathcal{F}^{[2]}_{11}$, $\alpha = \frac{\pi}{2}$. Notice that the initial line segment begins at vertex A and that the last line segment is at vertex D.}
\label{fig:BoundBox}
\end{center}
\end{figure}

\end{example}

\begin{proposition}\label{prop:boundForm}
Consider for even $i$ $\mathcal{F}^{[i]}_{6k+5}$ and for odd $i$ $\mathcal{F}_{6k+3}^{[i]}$. Then the first and last vertices of these curves are co-located with two adjacent vertices of the bounding box.
\end{proposition}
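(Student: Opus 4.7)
The plan is to prove Proposition \ref{prop:boundForm} by induction on $k$, propagating a base case via the self-similarity stated in Proposition \ref{prop:iCurveProps}. The base case, $k=0$, will be verified by a direct trace of the drawing rule; the inductive step uses the fact that $\mathcal{F}_{6k+5}^{[i]}$ (respectively $\mathcal{F}_{6k+3}^{[i]}$) is similar as a planar curve to the corresponding curve six indices smaller, and that bounding-box corner adjacency together with the first- and last-vertex correspondence are similarity invariants.

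For the base case I would apply the drawing rule character by character to $f_5^{[i]}$ (even $i$) and to $f_3^{[i]}$ (odd $i$), recording the coordinates of every vertex as a closed-form expression in $\alpha \in (0, \pi/2]$. From these, one reads off the $x$- and $y$-extremes, forms the bounding rectangle, and verifies by inspection that the starting vertex and the terminal vertex coincide with two adjacent corners of that rectangle. The palindromic decomposition $f_n^{[i]}=p_n ab$ from Proposition \ref{prop:wordprops} induces a reflective symmetry on the associated curve, which restricts the candidates for the extremal vertices and makes this verification essentially a check near the midpoint of the palindrome together with a computation of the final two segments encoded by $ab$.

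For the inductive step, assume the claim holds for $\mathcal{F}_{n-6}^{[i]}$ with $n = 6k+5$ or $n = 6k+3$. Two applications of Proposition \ref{prop:iCurveProps}(2) give that $\mathcal{F}_n^{[i]}$ is similar to $\mathcal{F}_{n-6}^{[i]}$, and the net-angle identity provided by Propositions \ref{prop:6angle} and \ref{prop:curveAngle} shows that this similarity is orientation-preserving. The construction of $\mathcal{F}_n^{[i]}$ out of the word $f_n^{[i]}$ also guarantees that the similarity carries the first vertex to the first vertex and the last vertex to the last vertex. Since any Euclidean similarity maps the bounding rectangle of a set to the bounding rectangle of its image and carries adjacent corners to adjacent corners, the property transfers immediately from $\mathcal{F}_{n-6}^{[i]}$ to $\mathcal{F}_n^{[i]}$.

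The main obstacle is in the base case: the trigonometric bookkeeping must be organised so that the identification of the bounding box does not change combinatorial type as $\alpha$ varies throughout $(0, \pi/2]$. If the coordinate extremes were attained at different vertices for different values of $\alpha$, a case analysis would be needed. The palindromic symmetry of $f_n^{[i]}$ is the key tool for avoiding this case split, since it reduces the list of candidate extremal vertices to a small, explicit set whose coordinates are monotonic in $\alpha$. A secondary subtlety is confirming that the similarity in Proposition \ref{prop:iCurveProps}(2) is a genuine Euclidean similarity (and not merely a shape-similarity up to error), which is implicit in the five-partite decomposition used to establish that proposition.
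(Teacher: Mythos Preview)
Your inductive step rests on a reading of Proposition~\ref{prop:iCurveProps}(2) that the proposition cannot support. The curve $\mathcal{F}_n^{[i]}$ has $F_n^{[i]}$ unit segments while $\mathcal{F}_{n-6}^{[i]}$ has $F_{n-6}^{[i]}$; as point sets in $\mathbb{R}^2$ they are \emph{not} related by a Euclidean similarity, because the larger curve carries strictly more small-scale structure. The word ``similar'' in that proposition refers to the overall configuration (same global shape, same entry and exit directions, scaling factor $1+\sqrt{2}$ between endpoints), not to the existence of a rigid scaling carrying one curve onto the other. Consequently the sentence ``any Euclidean similarity maps the bounding rectangle of a set to the bounding rectangle of its image'' has no map to apply to, and the inductive transfer collapses. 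You flagged this yourself as a ``secondary subtlety,'' but it is in fact the decisive obstruction: the claim you need is simply false, and the five-partite decomposition you mention only at the end is what is actually required.

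The paper's argument proceeds instead through Proposition~\ref{prop:iCurveProps}(4). One checks the base cases $\mathcal{F}_5^{[i]}$ and $\mathcal{F}_8^{[i]}$ directly (two base cases are needed because the recursion reaches back to both $n-3$ and $n-6$), and then shows that the specific arrangement of the five sub-curves in the decomposition $\mathcal{F}_n^{[i]}=\mathcal{F}_{n-3}^{[i]}\mathcal{F}_{n-3}^{[i]}\mathcal{F}_{n-6}^{[i]}\mathcal{F'}_{n-3}^{[i]}\mathcal{F'}_{n-3}^{[i]}$ keeps every segment on one side of the line through the first and last vertices, given inductively that each sub-curve already has this property relative to its own baseline. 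Your base-case plan is fine, and the palindrome observation is a good organising device there; what needs to replace your inductive step is exactly this geometric assembly argument for the five pieces.
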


\begin{proof}
Consider the even $i$ case. Curves of the form $\mathcal{F}_{6k+5}^{[i]}$ have first and last vertices on a line in $\mathbb{R}^2$ and we wish to show that no other segments cross this line. It is true for $\mathcal{F}_5^{[i]}$ because for any even $i$ it consists of a diagonal walk up, a double segment, and a diagonal walk down of equal length providing a net right turn. This gives that the net change in the vertical coordinate is zero. Notice that, with this description of $\mathcal{F}_5^{[i]}$, it is entirely contained inside of its bounding box. It is true for $\mathcal{F}_8^{[i]}$ because it is the assemblage of four copies of $\mathcal{F}_5^{[i]}$ and an $\mathcal{F}_2^{[i]}$ in the shape shown in Figure \ref{fig:PhiBox}.

Recall that each curve $\mathcal{F}_{6k+5}^{[i]}$ is made up of curves $\mathcal{F}_{6k+2}^{[i]}$ and $\mathcal{F}_{6k-1}^{[i]}$ (from the five-partite structure). Assuming that the claim is true for $\mathcal{F}_{6k+2}^{[i]}$ and $\mathcal{F}_{6k-1}^{[i]}$, then it holds for their assemblage because of their arrangement in the five-partite structure.

Finally, since the case for odd $i$ introduces only a rotation of $\frac{\pi}{4}$ and rotation does not change these properties, it holds for this case as well.
\end{proof}

\begin{figure}[t]
\begin{center}
\includegraphics[width=3in]{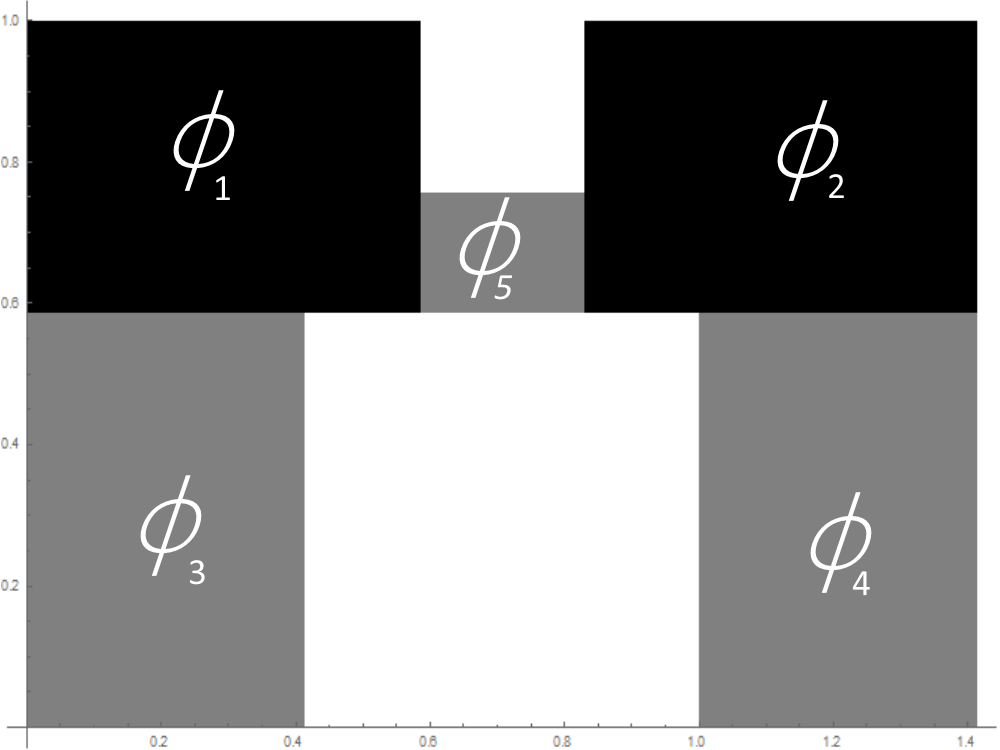}
\caption{The mappings $\phi_k$ when $\alpha = \frac{\pi}{2}$ from the IFS over the set $V$, the box they are contained in.}
\label{fig:PhiBox}
\end{center}
\end{figure}

\begin{proposition}\label{prop:boundOverlap}
The bounding boxes of each individual sub-curve of $\mathcal{F}_n^{[i]}$ from the five-partite structure have disjoint interiors.
\end{proposition}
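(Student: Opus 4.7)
The plan is to argue inductively using the five-partite structure together with Proposition \ref{prop:boundForm}, which controls the position and orientation of each sub-curve within its own bounding box. Since each of the four outer pieces is similar to $\mathcal{F}_{n-3}^{[i]}$ and the middle piece similar to $\mathcal{F}_{n-6}^{[i]}$, and since each such piece sits inside a rectangle with two adjacent corners at its first and last vertices, knowing the six connecting endpoints together with the drawing orientation uniquely determines all five rectangles.

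Next, I would compute these six endpoints explicitly. The net rotation imparted by each sub-curve comes from Proposition \ref{prop:curveAngle} (applied to the relevant shift of $n$), the small rotations at the interfaces come from the drawing rule, and the segment counts come from the Fibonacci recurrence for $F_n^{[i]}$. Plotting the five resulting rectangles in the plane yields the arrangement depicted in Figure \ref{fig:PhiBox}; disjointness of their interiors then reduces to a finite, elementary check on this arrangement (comparing the coordinates of the rectangles' sides in the common coordinate frame).

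Third, I would cast this as an induction on $n$. The base cases ($n=5$ for even $i$ and $n=3$ for odd $i$) can be checked directly from the short word $f_n^{[i]}$. In the inductive step, since each of the outer four pieces of $\mathcal{F}_n^{[i]}$ is a rigid motion of $\mathcal{F}_{n-3}^{[i]}$ and the middle piece a rigid motion of $\mathcal{F}_{n-6}^{[i]}$, and each such sub-curve is contained in its own bounding rectangle by Proposition \ref{prop:boundForm}, the disjointness of the five bounding boxes at scale $n$ persists at every finer scale via similarity.

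I foresee two main obstacles. The first is handling the dependence on the drawing angle $\alpha\in [0,\pi/2]$ uniformly: the bounding rectangles' aspect ratios and relative positions both depend on $\alpha$, and one must argue that disjointness persists for every $\alpha$ in the interval rather than at a single convenient value such as $\pi/2$. The second is the bookkeeping difference between the parity cases of $i$, in which the reference curve $\mathcal{F}_{6k+5}^{[i]}$ versus $\mathcal{F}_{6k+3}^{[i]}$ changes; the cleanest approach is to state the geometric check generically in terms of the net angle supplied by Proposition \ref{prop:curveAngle} and then specialize the two cases in parallel.
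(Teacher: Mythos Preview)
Your plan is in the right spirit and shares its starting point with the paper's argument: both rely on Proposition~\ref{prop:boundForm} to pin the first and last vertices of each sub-curve to adjacent corners of its bounding rectangle, and both then reduce the problem to checking a finite planar configuration of five rectangles. Where the paper differs is that it does not compute the six endpoints or run an induction at all. It argues directly at level $n$ in two steps: for \emph{adjacent} boxes, the shared endpoint is a shared corner of both rectangles, and the constraint $\alpha\le\pi/2$ forces the two rectangles to lie on opposite sides of the common edge direction, so their interiors cannot meet. For the only \emph{non-adjacent} pair (the first and last boxes), the paper invokes Proposition~\ref{prop:AspectRatio} to conclude that the rectangles are taller than they are wide, so the horizontal span contributed by the three middle boxes exceeds the combined horizontal reach of the two outer boxes.

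There is a genuine gap in your proposal at exactly this last point. Knowing the six connecting endpoints and the orientations determines one \emph{side} of each bounding rectangle, but it does not determine the perpendicular extent; you still need a bound on $h_{n-3}^{[i]}$ relative to $w_{n-3}^{[i]}$ to rule out the first and last rectangles overlapping across the ``mouth'' of the U-shape. Your phrase ``finite, elementary check on this arrangement'' hides precisely this step, and the tool you are missing is the aspect-ratio estimate of Proposition~\ref{prop:AspectRatio}. Your proposed induction also does not close this gap: the passage ``disjointness of the five bounding boxes at scale $n$ persists at every finer scale via similarity'' presumes that $\mathcal{F}_n^{[i]}$ and $\mathcal{F}_{n+6}^{[i]}$ are exactly similar, but for general $\alpha$ the recursions \eqref{eq:wk}--\eqref{eq:hk} show the aspect ratio varies with $n$ and only stabilizes in the limit, so a verification at a single base level does not automatically propagate.
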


\begin{proof}
From the five-partite structure, we know that the last vertex of one sub-curve is the first vertex of the next sub-curve since the sub-curves are connected and form a continuous path. From Proposition \ref{prop:boundForm}, we know that the first and last vertices of a sub-curve are also vertices of the bounding box. This means that the bounding boxes of the two sub-curves that share a vertex also share that same vertex. Due to the fact that we draw with an $\alpha\le\frac{\pi}{2}$ angle and that this vertex is specifically the first or last vertex, no segment from one bounding box, that is one sub-curve, will ever intersect with the interior of the adjacent sub-curve's bounding box.

It remains to be shown that non-adjacent sub-curves also have bounding boxes with disjoint interiors. Specifically, we need to show that that the bounding boxes for the first and last sub-curves do not intersect. Because of the five-partite structure, we know that the first and last boxes are the same as the second and fourth boxes, but the second and fourth are at a rotation of $\alpha$. From Proposition \ref{prop:AspectRatio}, we know that these boxes are taller than they are wide, the distance created by the middle three boxes is greater than the combined width of the first and last boxes, thus, causing no overlap.
\end{proof}

The first visual impressions from Figure \ref{fig:FibCurvesIN} is that as $i$ changes from even to odd and back again that to pick a particular ``Fibonacci fractal'' is not a canonical choice. We follow \cite{MonnFrac} and choose for $i$ even $n=6k+5$  as the approximating curves to a fractal. For $i$ odd we choose $n=6k+3$ because it is only varies from the first choice by a rotation. 

\begin{figure}[t]
\begin{center}
\begin{tabular}{|c||c|c|c|c|}\hline
 & $n=15$ & $n=16$ & $n=17$ & $n=18$ \\ \hline \hline
$i=2$ & \includegraphics[width=.8in]{./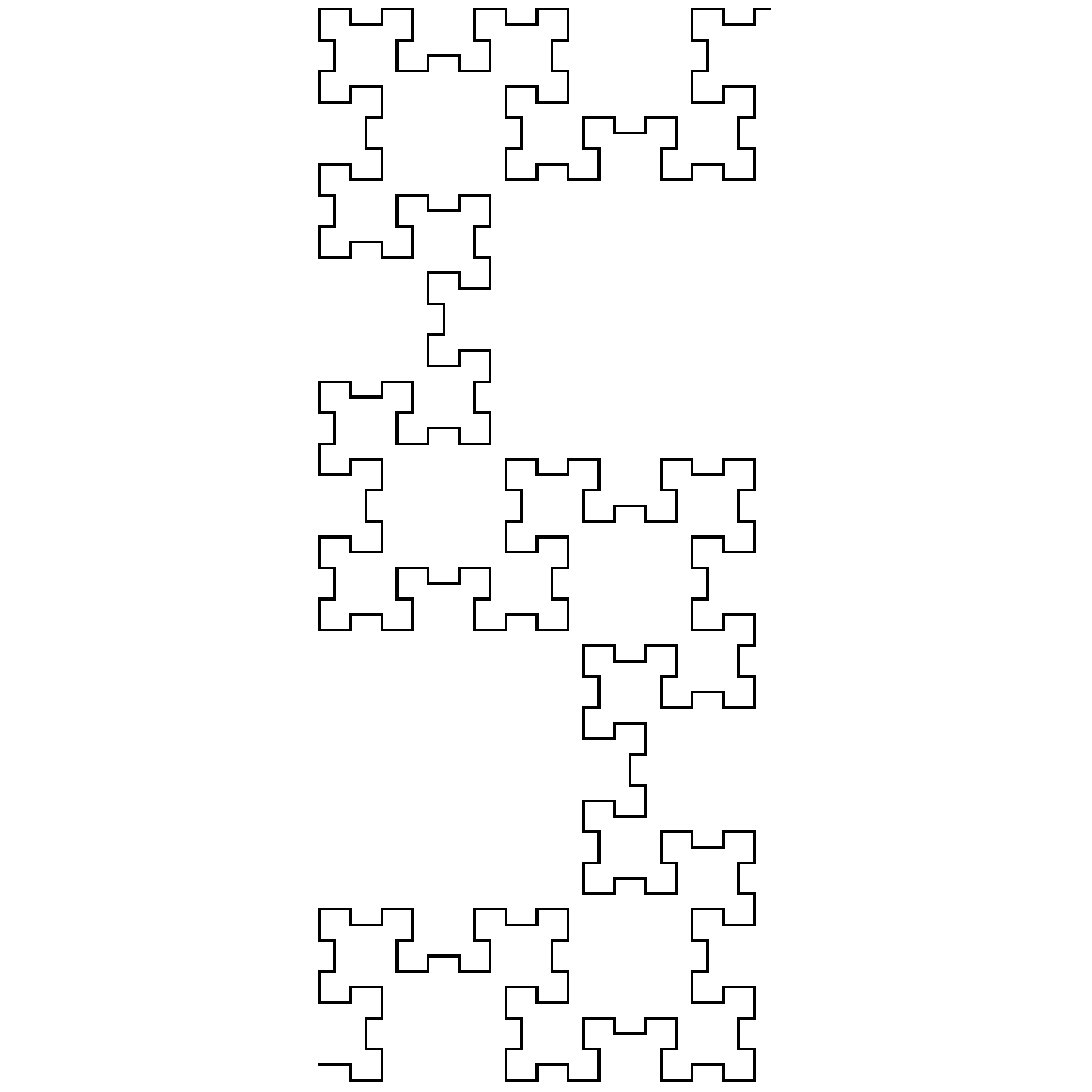} & \includegraphics[width=.8in]{./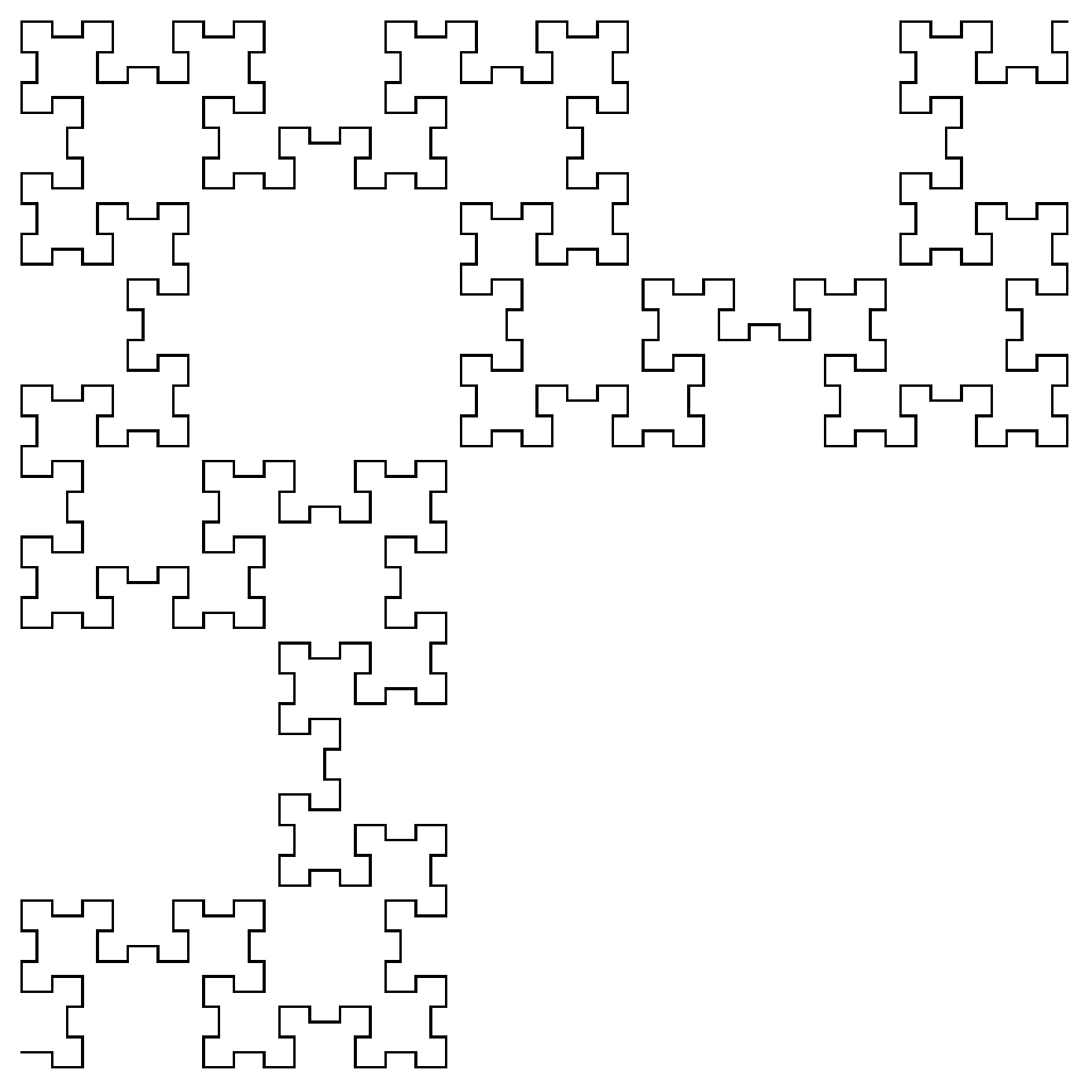} & \includegraphics[width=.8in]{./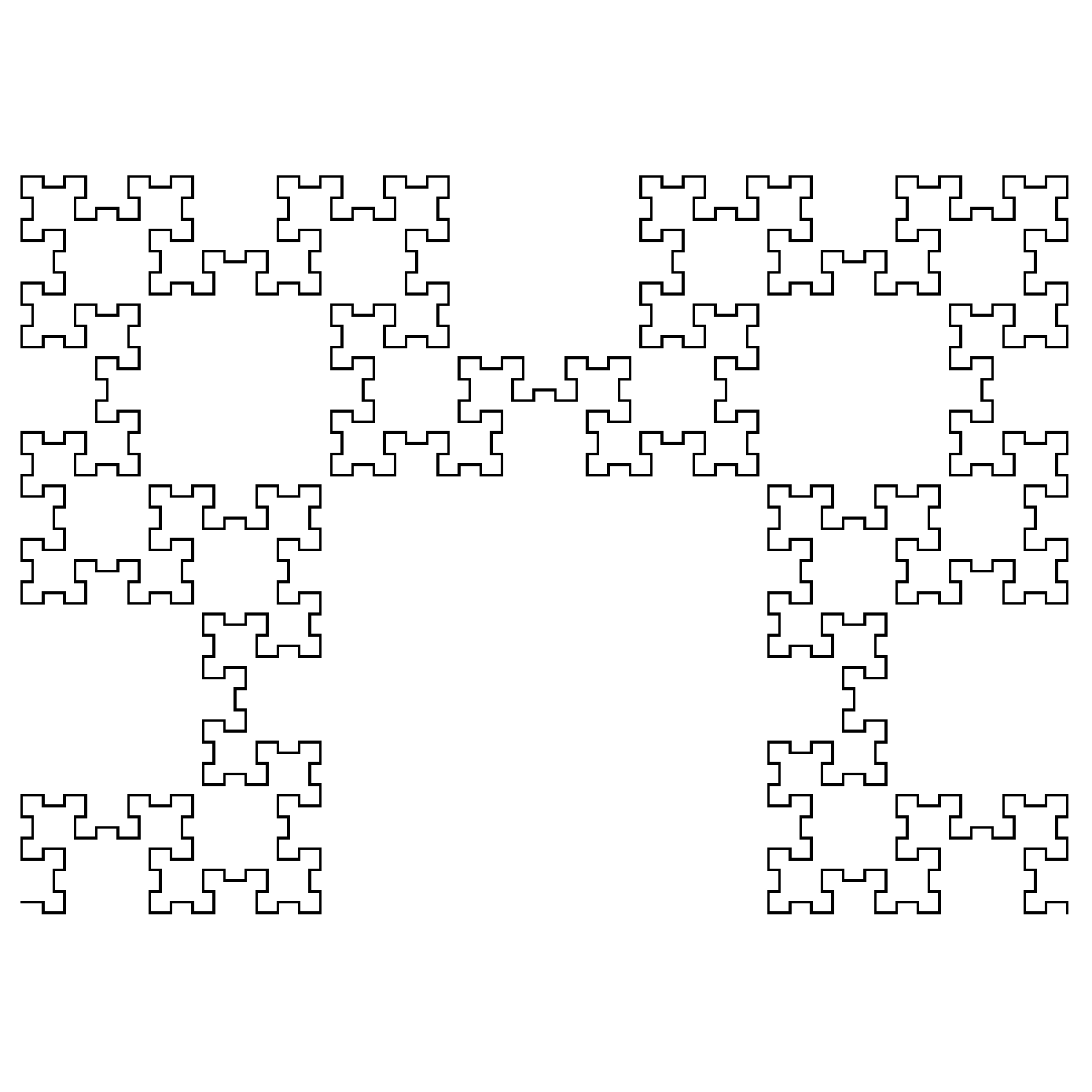} & \includegraphics[width=.8in]{./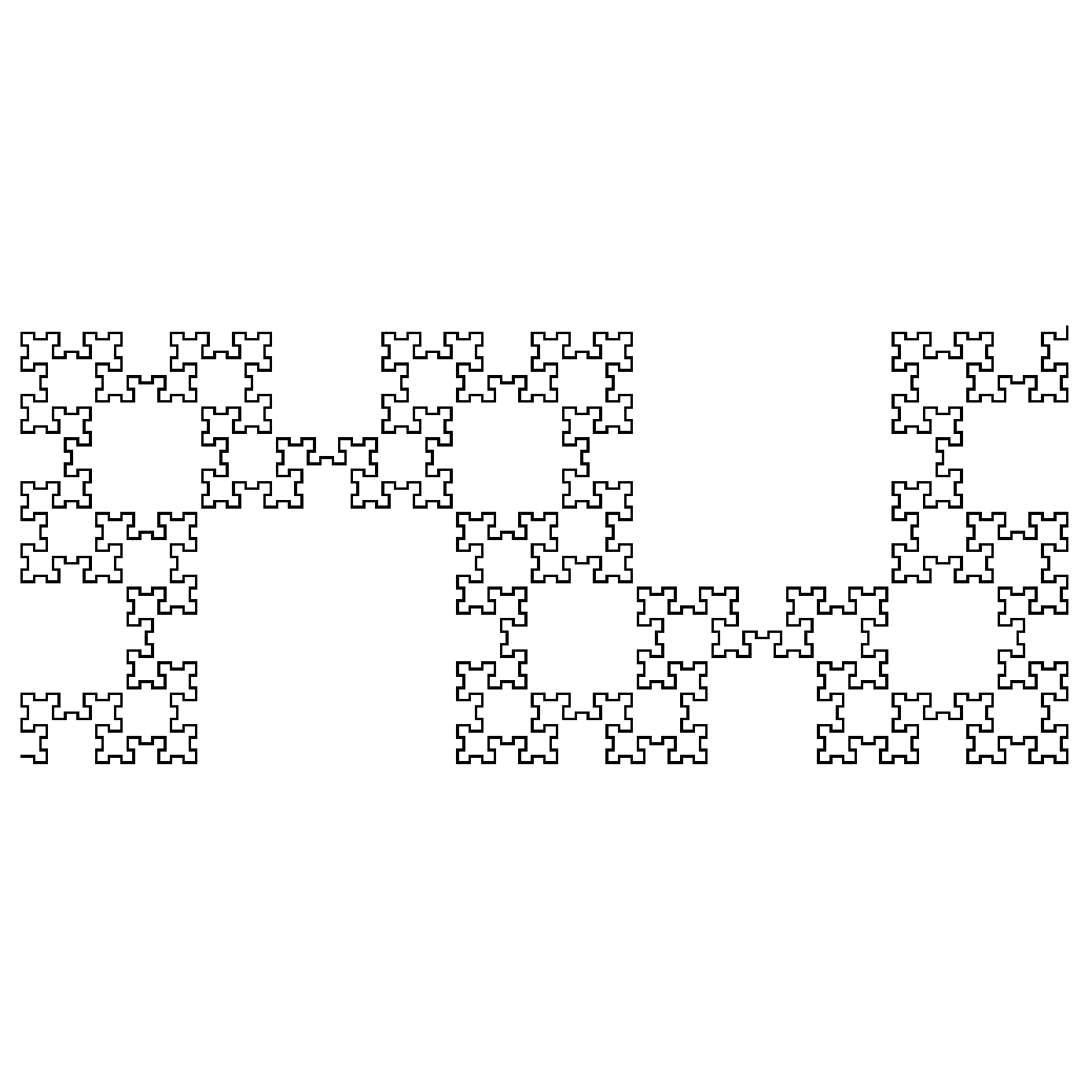} \\ \hline

$i=3$ & \includegraphics[width=.8in]{./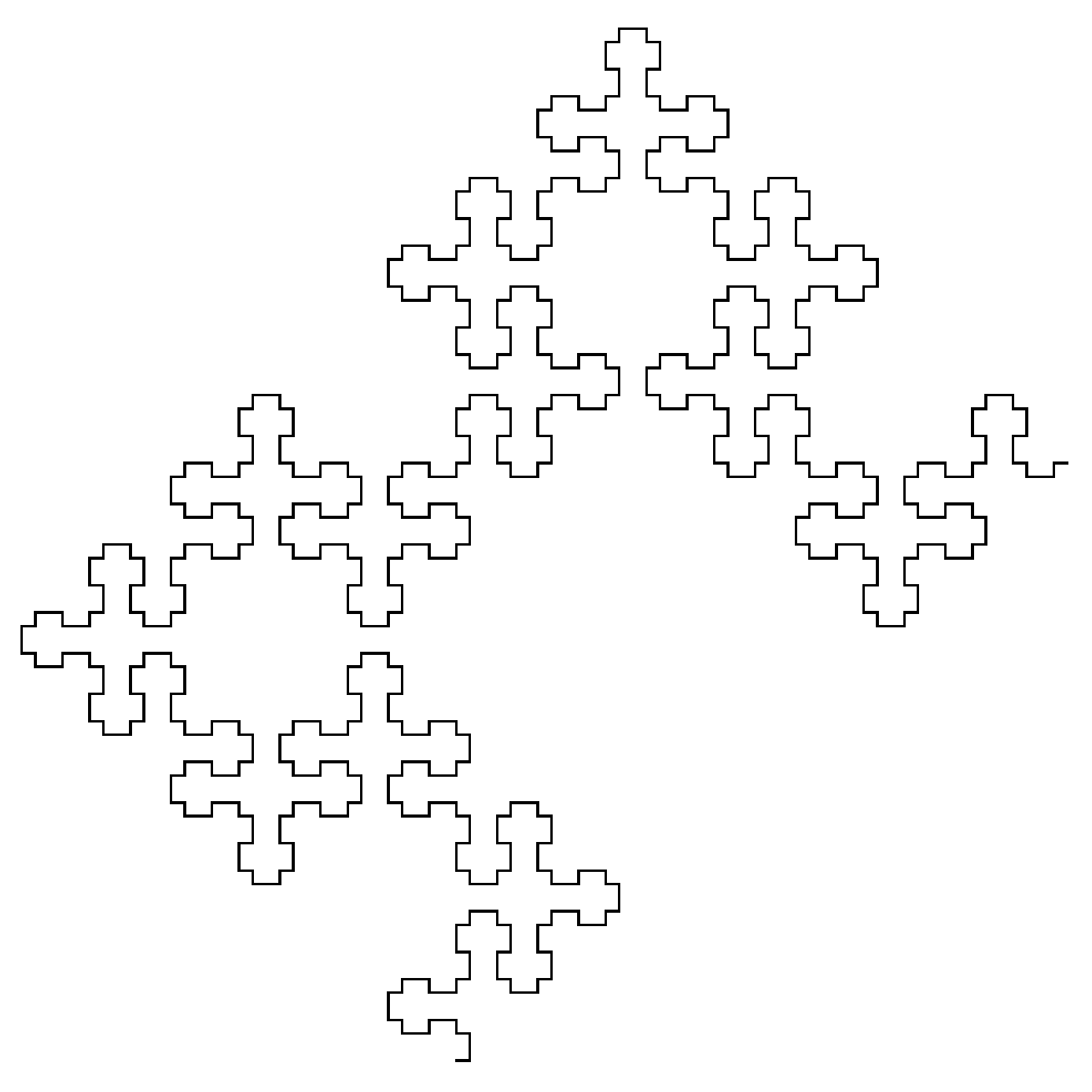} & \includegraphics[width=.8in]{./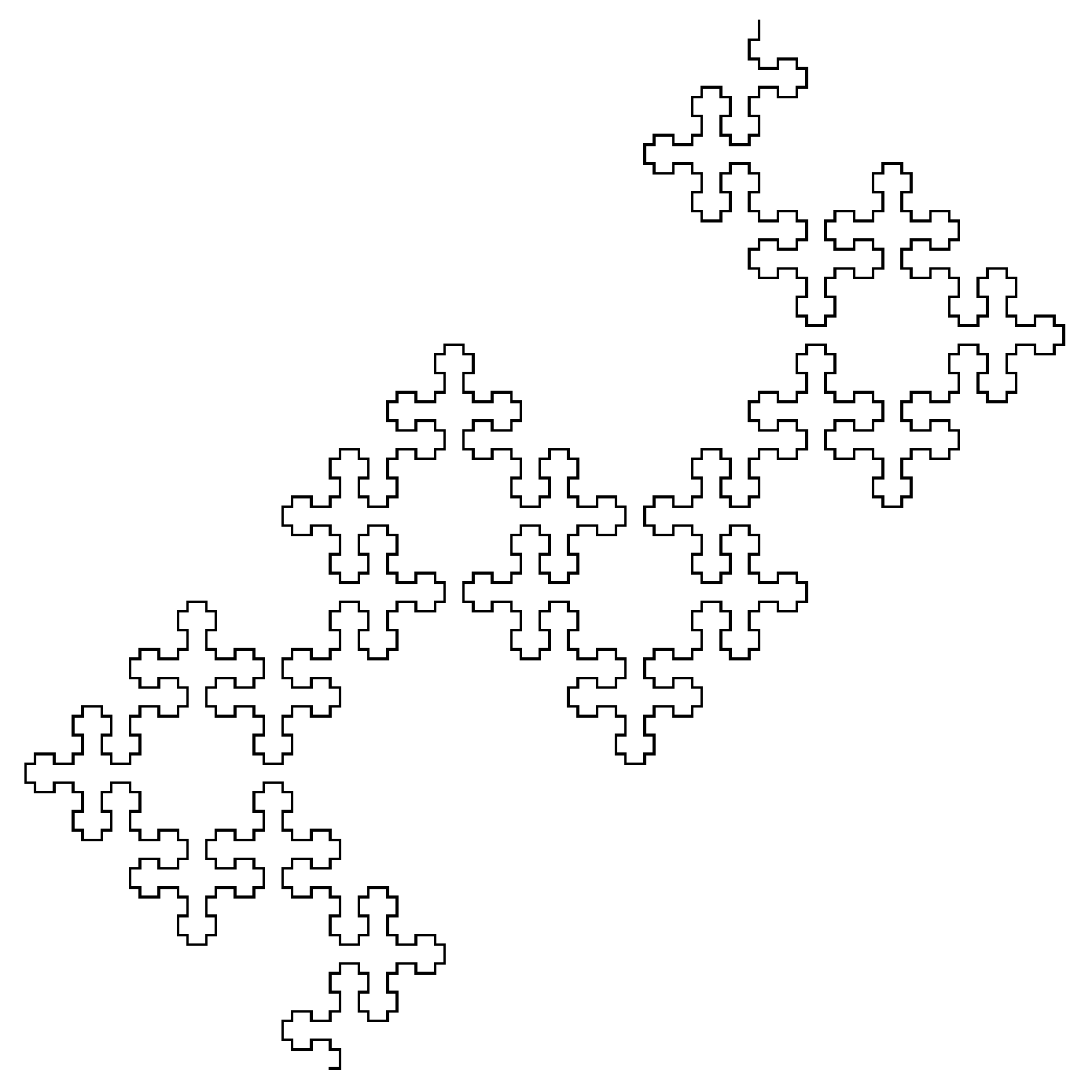} & \includegraphics[width=.8in]{./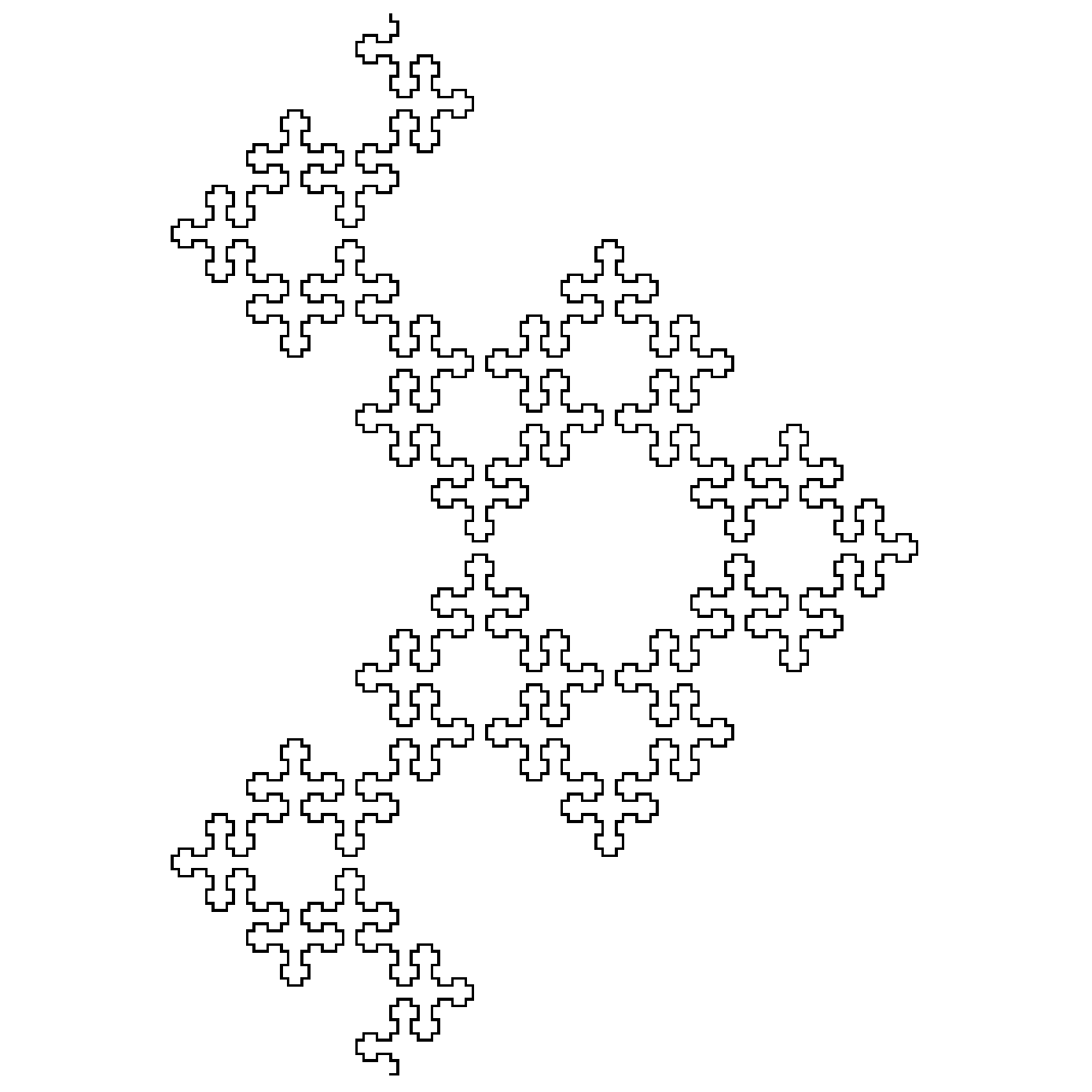} & \includegraphics[width=.8in]{./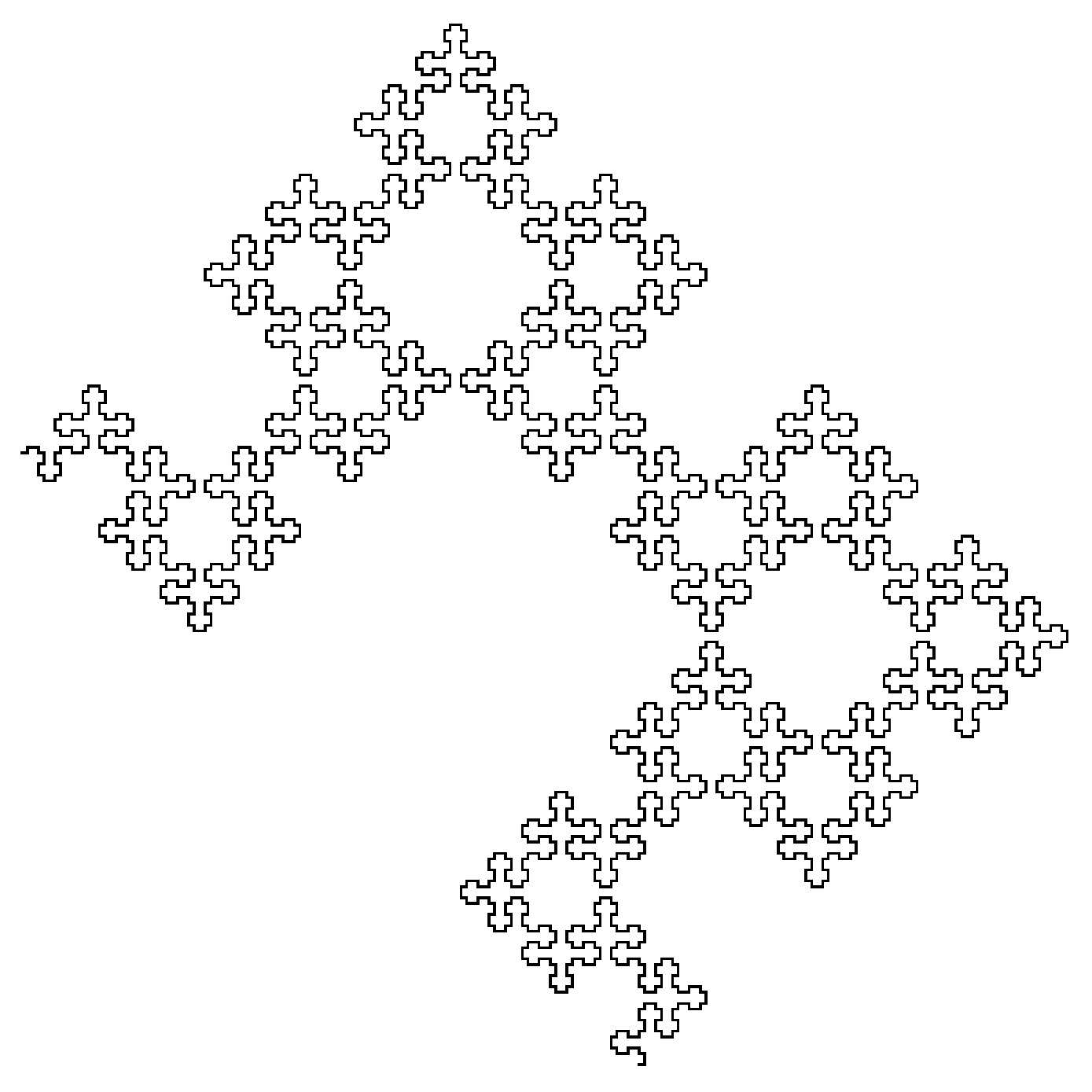} \\ \hline

$i=4$ & \includegraphics[width=.8in]{./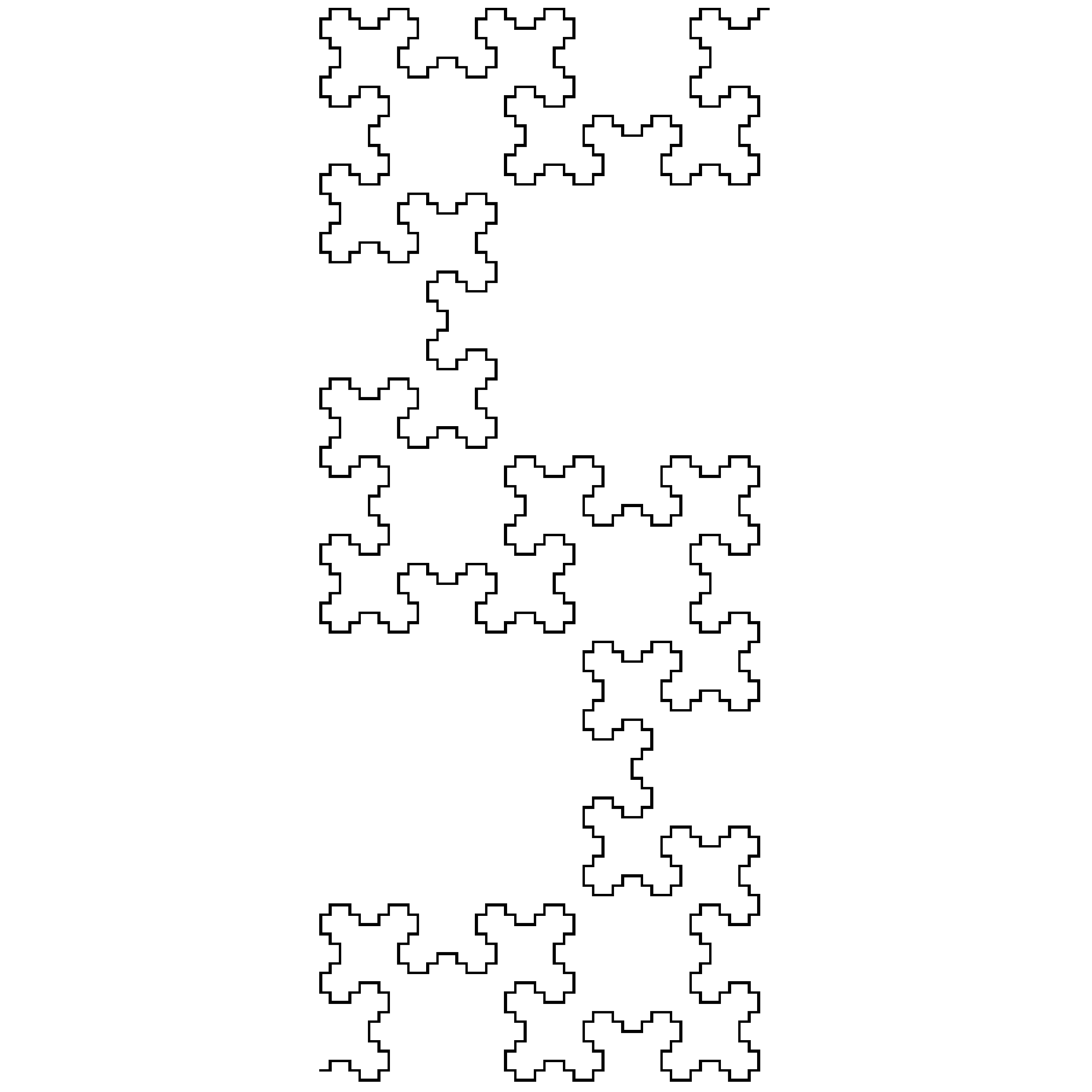} & \includegraphics[width=.8in]{./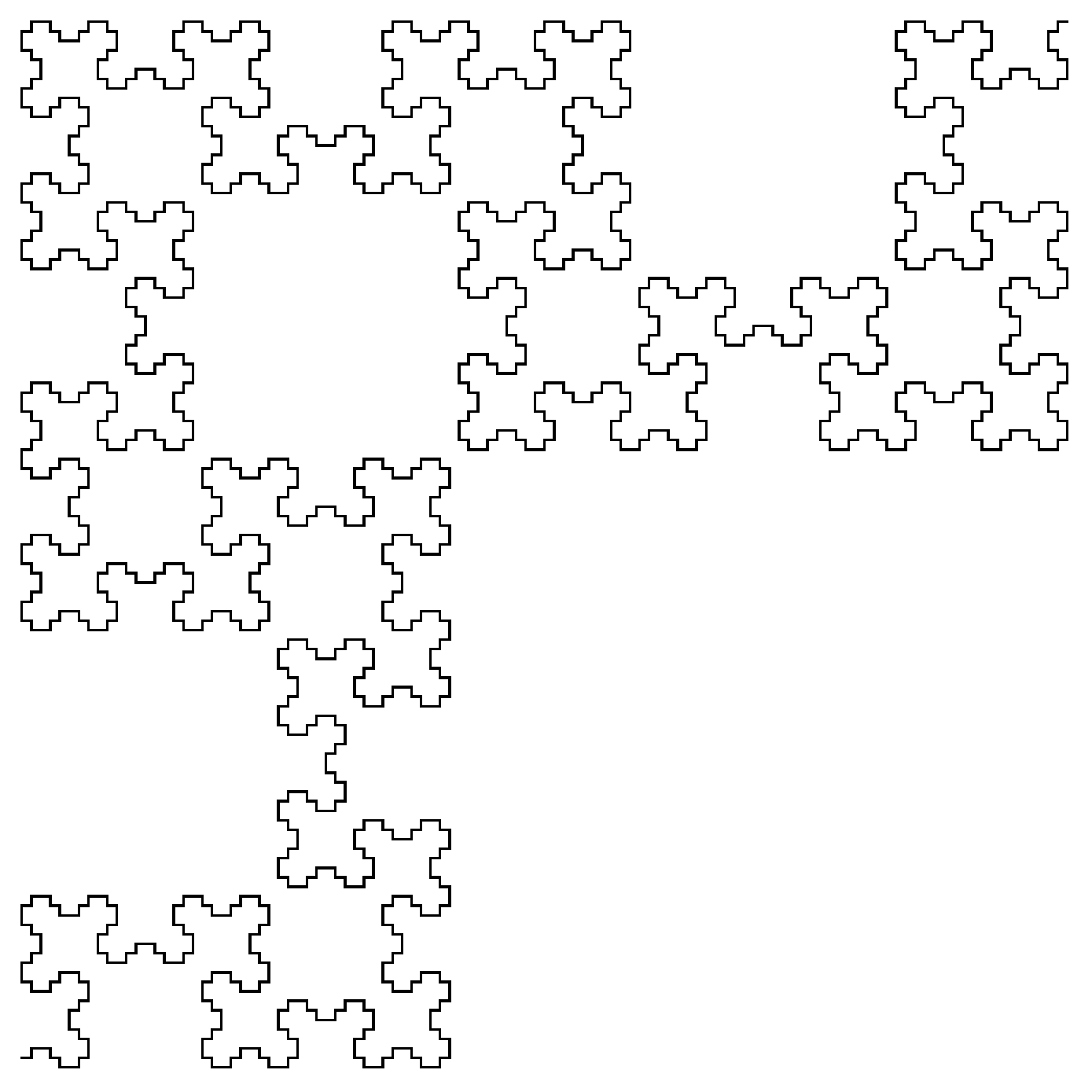} & \includegraphics[width=.8in]{./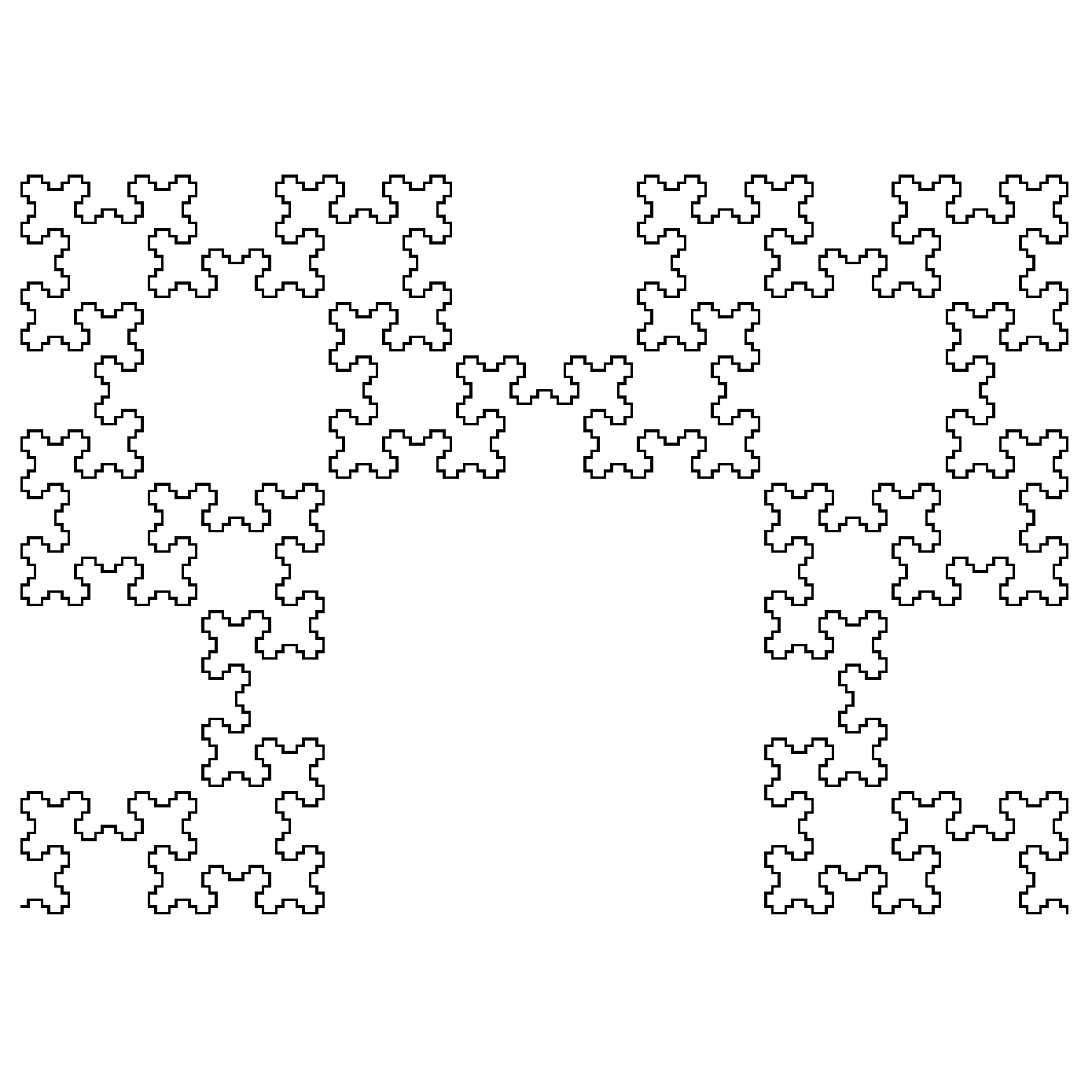} & \includegraphics[width=.8in]{./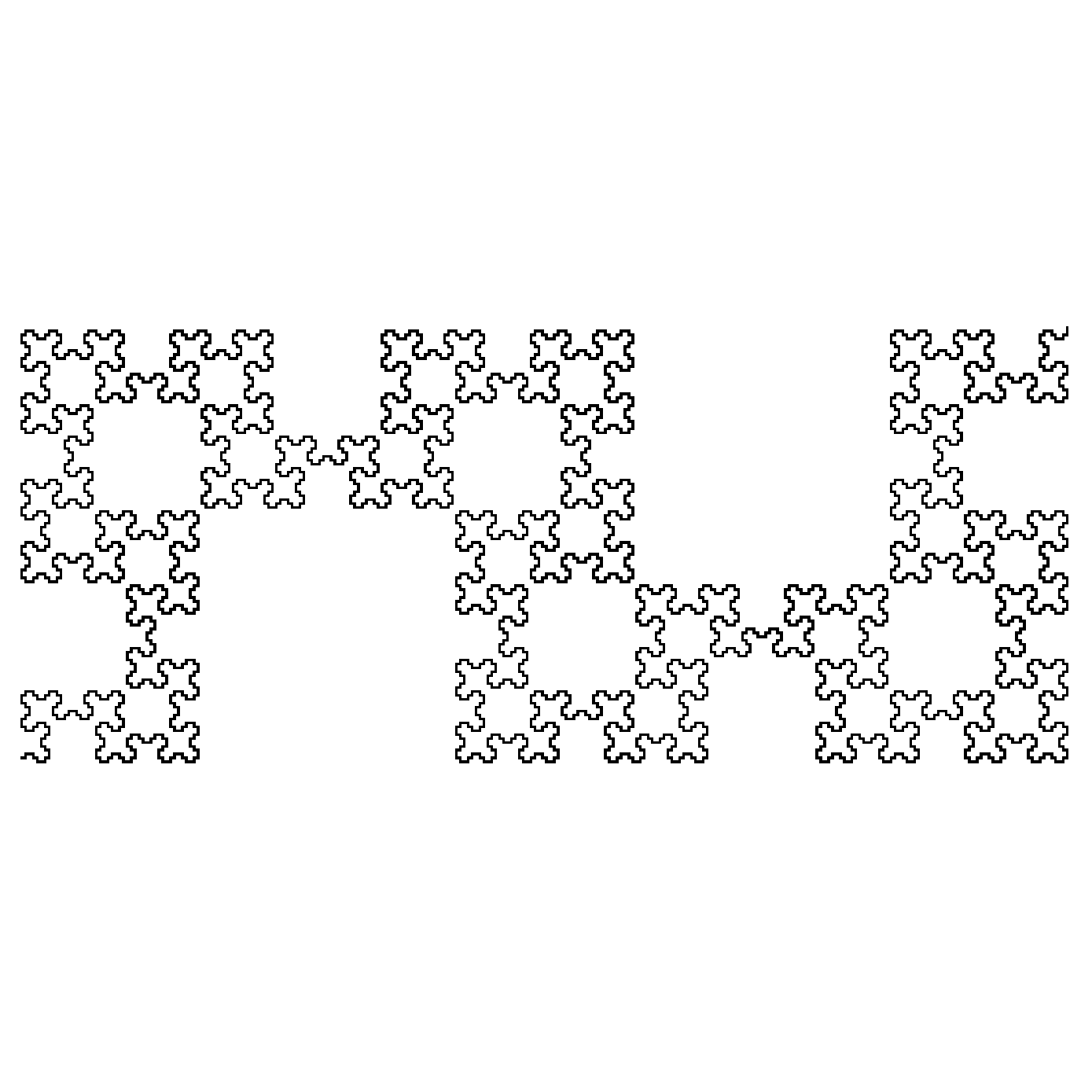} \\ \hline

$i=5$ & \includegraphics[width=.8in]{./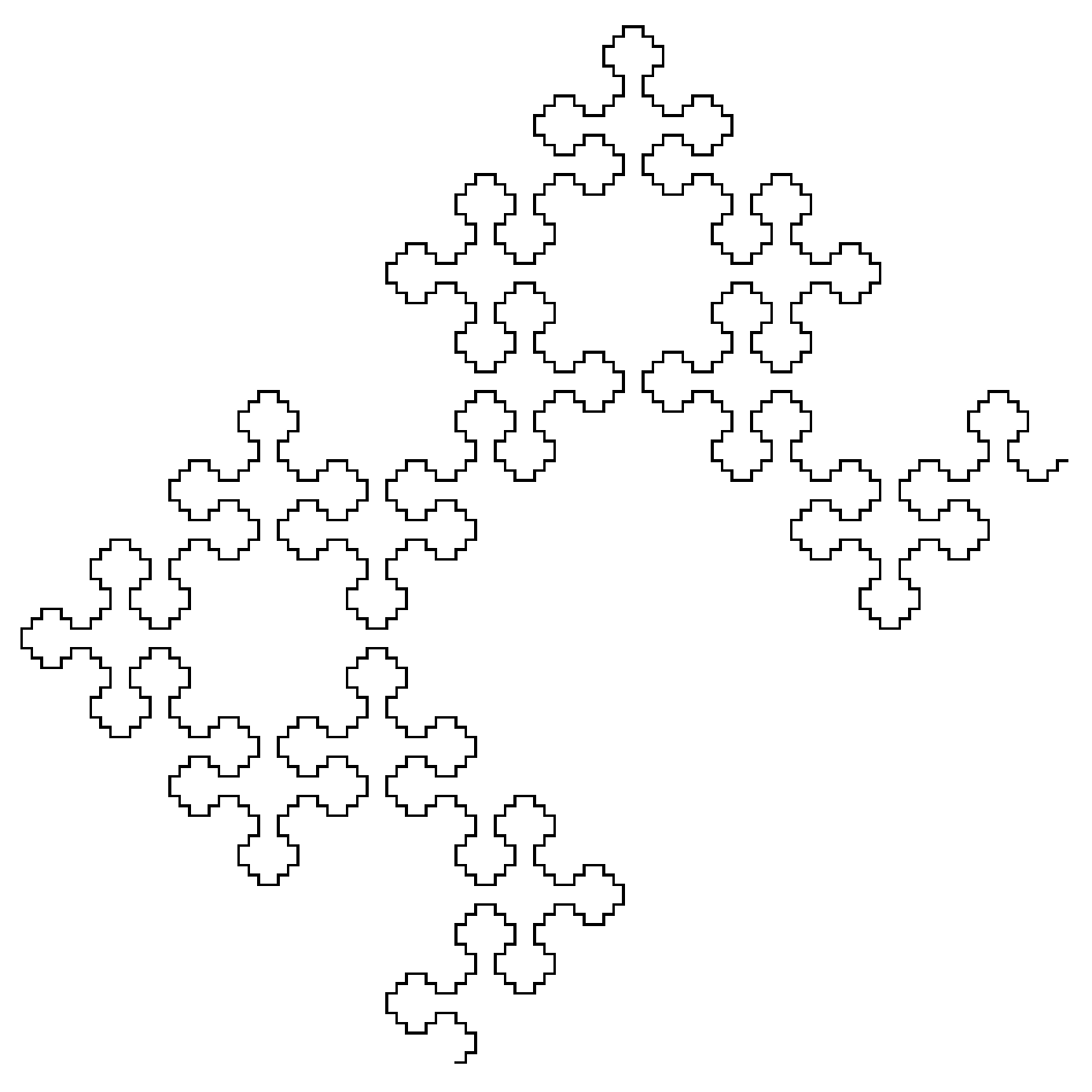} & \includegraphics[width=.8in]{./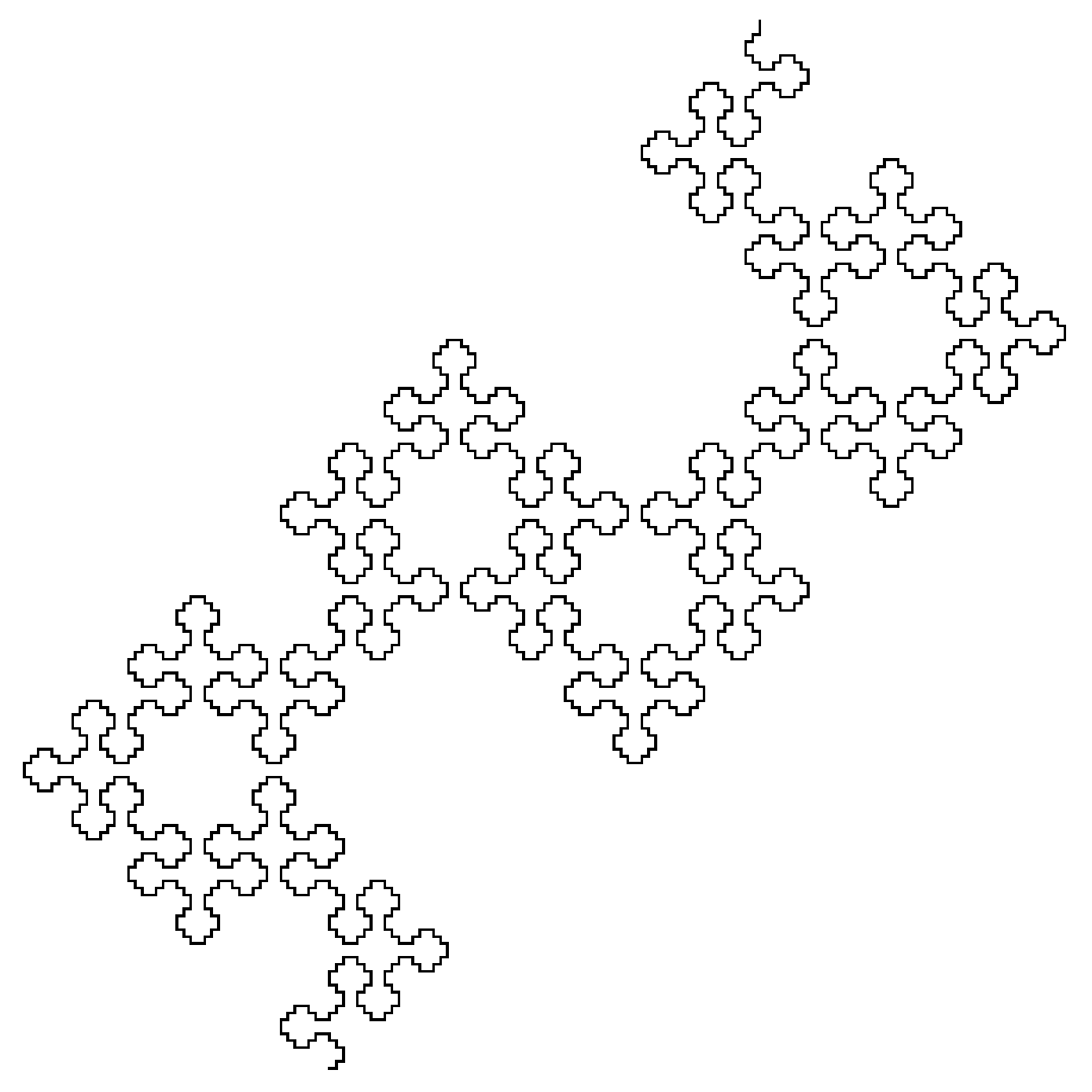} & \includegraphics[width=.8in]{./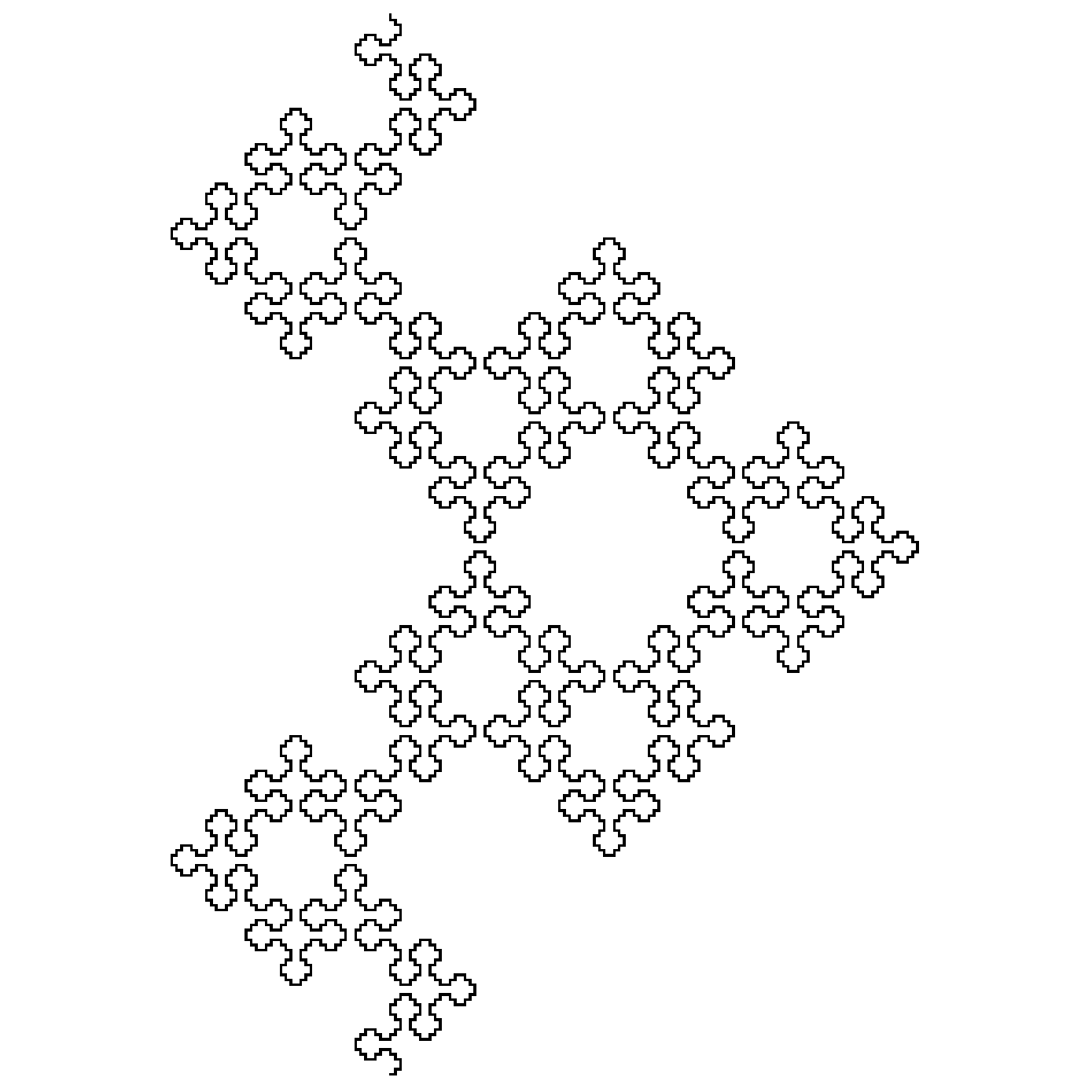} & \includegraphics[width=.8in]{./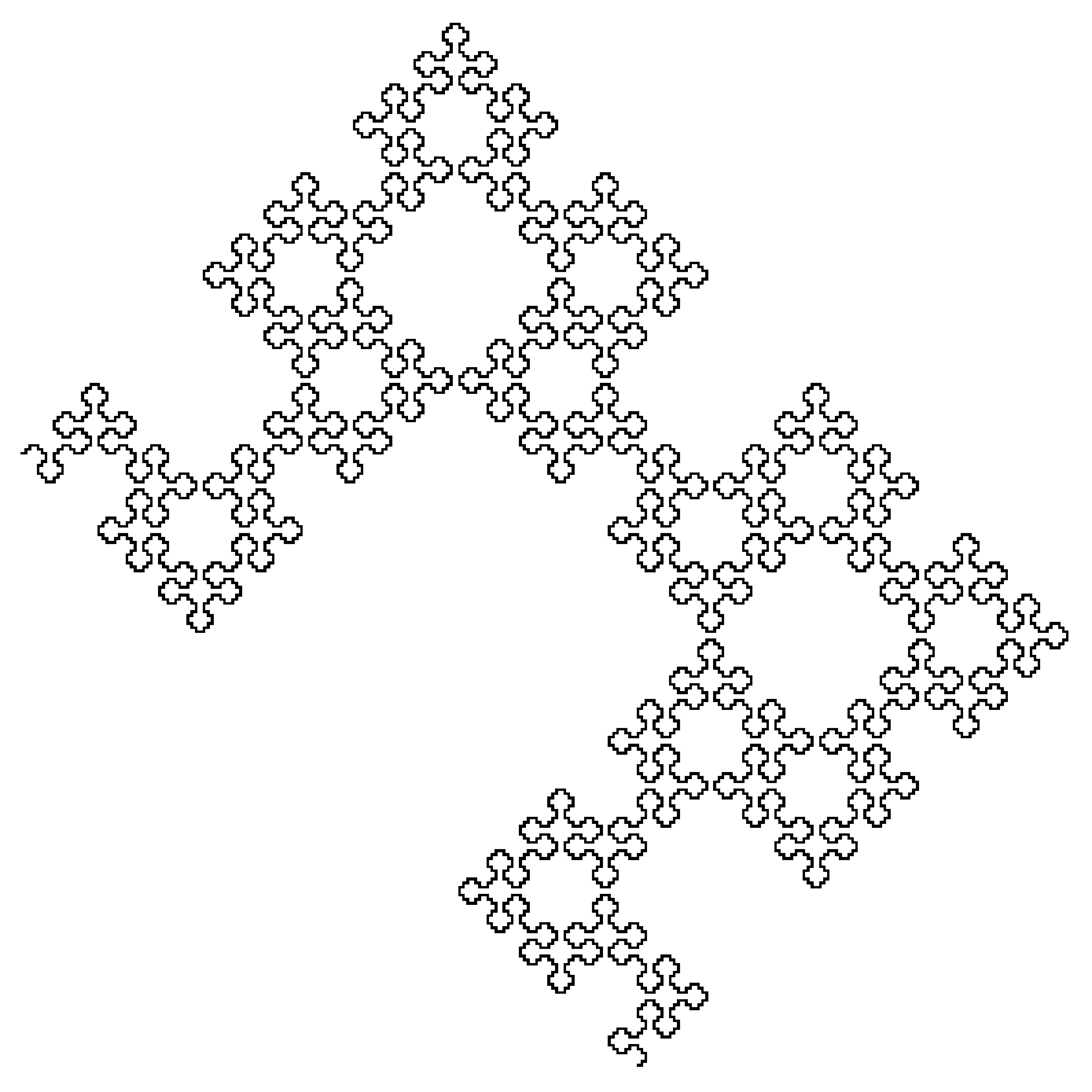} \\ \hline
\end{tabular}

\caption{The Fibonacci word curves for $i$ going from $2$ to $5$ and $n$ going from $15$ to $18$. In all of these pictures $\alpha = \frac{\pi}{2}$.}\label{fig:FibCurvesIN}
\end{center}
\end{figure}

\begin{figure}[t]
\begin{center}
\begin{tabular}{|c|c|c|} \hline
\includegraphics[width=1.25in]{./Pictures/F2_17.pdf} & \includegraphics[width=1.25in]{./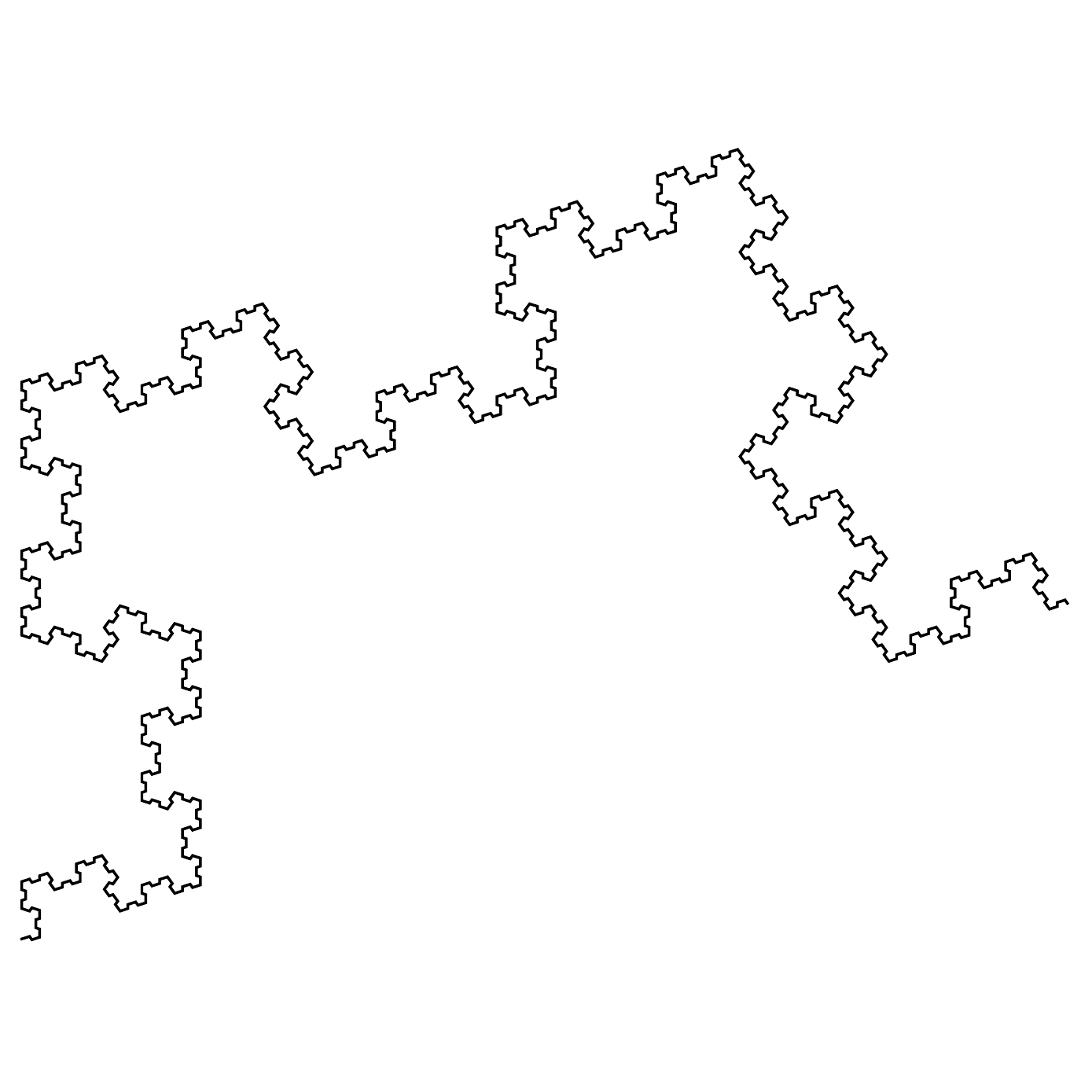} & \includegraphics[width=1.25in]{./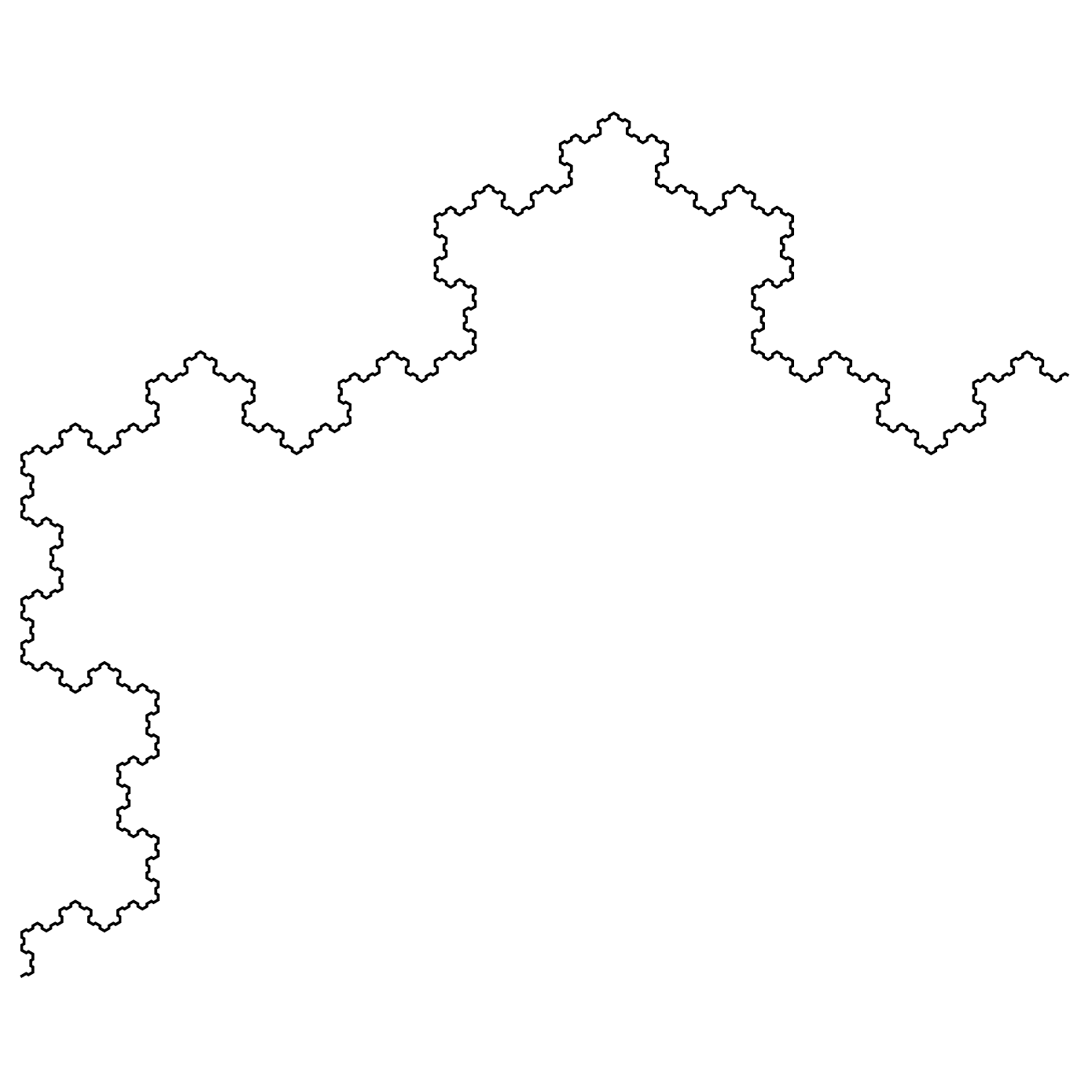}\\ \hline

\includegraphics[width=1.25in]{./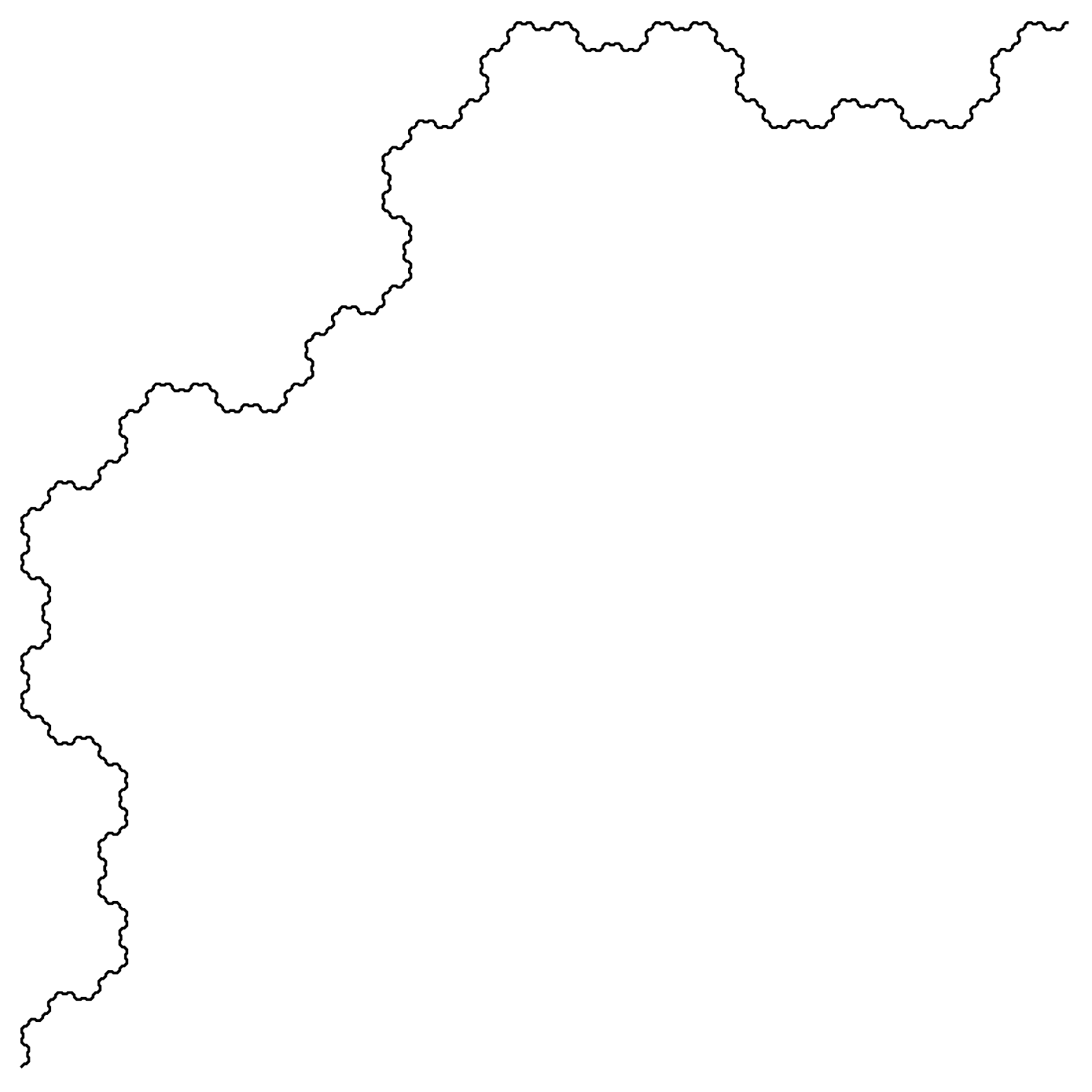} & \includegraphics[width=1.25in]{./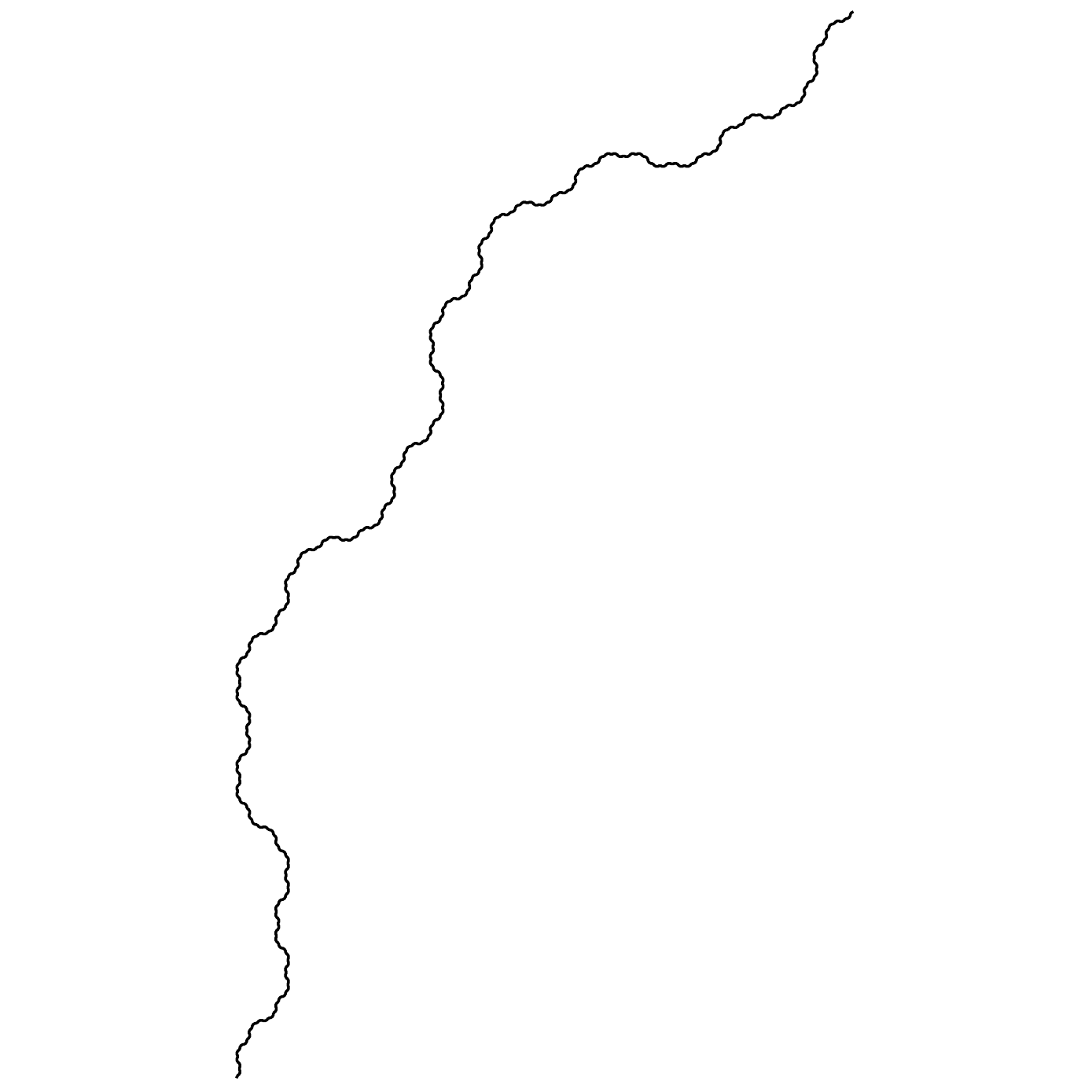} & \includegraphics[width=1.25in]{./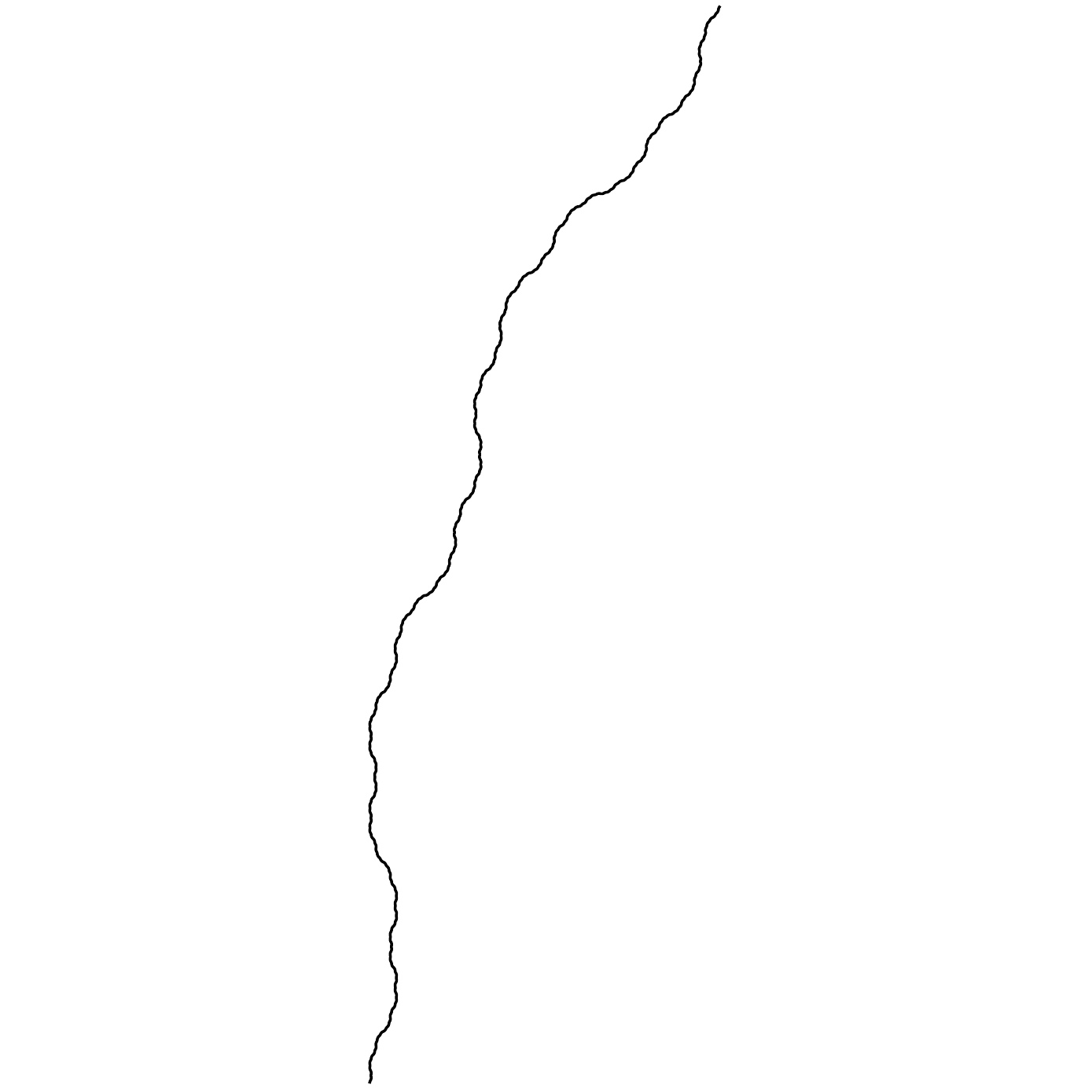} \\ \hline
\end{tabular}

\caption{The curves of $\mathcal{F}^{[2]}_{17}$ for drawing angles (left to right, top to bottom) of $2\pi/4$, $2\pi/5$, $2\pi/6$, $2\pi/8$, $2\pi/12$ and $2\pi/20$. For a drawing angle of zero, the curve is a straight vertical line.}
\label{fig:F2angle}
\end{center}
\end{figure}

\section{Fractal Geometry}\label{sec:fracgeo}

In this section we state the definitions and constructions necessary for the computation of the Hausdorff dimension of a fractal given by an iterated function system (IFS) satisfying the open set condition. We start with the definition of Hausdorff dimension. The definitions and theorems in this section are those of \cite{FalcFrac}.

\begin{definition}\label{def:HausDim}
The Hausdorff dimension, $dim_\mathcal{H}$ of a subset $F\subset\mathbb{R}^2$ is
$$dim_{\mathcal{H}}F=\inf\{s\geq 0:\mathcal{H}^s(F)=0\}=\sup\{s:\mathcal{H}^s(F)=\infty \}$$
where the Hausdorff measure $\mathcal{H}^s_\delta$ is

$$\mathcal{H}^s_\delta(F)=\inf\left\{\sum_{i=1}^\infty|U_i|^s:\{U_i\} \text{ is a $\delta$-cover of F} \right\}$$
where $\mathcal{H}^s(F)=\lim_{\delta\to 0}\mathcal{H}^s_\delta(F).$
\end{definition}

While a theoretically satisfying definition of dimension it is a difficult one to compute. But for a particular class of self-similar fractals its computation can often be done via a simpler route. 

An Iterated Function System (IFS) is an indexed collection of mappings $\{\phi_{i}\}_{i \in I}$ which in this case are from $\mathbb{R}^{2}$ onto itself. We also make the assumption that the mappings are contracting similarities. An IFS defines a set function by 
$$\mathbb{R}^{2} \supset K \mapsto \bigcup_{i \in I} \phi_i(K) = \Phi(K).$$ 
A set is called self-similar if $K = \Phi(K)$. Since similarities have a contraction ratio $r_i$ we can define a notion of dimension called variously the fractal dimension, self-similarity dimension, or similar by $dim_{SS}(K) = s$ where $s$ is the solution to $$\sum_{i \in I} r_i^{s} = 1.$$ Needless to say the computation of the self-similarity dimension from an IFS is much simpler than the direct computation of the Hausdorff dimension. However the two dimensions often take the same value for "nice" self-similar fractals.

\begin{definition}\label{def:OSC}
From \cite{FalcFrac}, an iterated function system satisfies the open set condition if
$$V\supset \bigcup_{k=1}^m\phi_k(V)$$
where $V$ is some non-empty, bounded and open set, and the $\phi_k$ are the  functions from the IFS.
\end{definition}

\begin{theorem}{\cite[Theorem 9.1]{FalcFrac}}\label{thm:attractor}
For an IFS there exists a unique attractor F, if the contractions $\{\phi_1, \phi_2, ..., \phi_k\}$ on $D\subset \mathbb{R}$ satisfy
$$|\phi_k(\vec{x})-\phi_k(\vec{y})|\le c_k|\vec{x}-\vec{y}|$$
with $(\vec{x},\vec{y})\in D$ and $c_k<1$ for each $i$.
\end{theorem}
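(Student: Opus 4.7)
The plan is to derive this as an instance of Banach's fixed-point theorem applied to the Hutchinson operator $\Phi(K) = \bigcup_{k=1}^{m} \phi_k(K)$, regarded as a self-map of a complete metric space of compact sets. First I would fix an ambient closed ball $\overline{B} \subset \mathbb{R}^2$ large enough that every $\phi_k$ maps $\overline{B}$ into itself. Such a ball exists because each contraction has a unique fixed point and absorbs all sufficiently large concentric balls, and one simply picks $\overline{B}$ to work simultaneously for all $m$ maps. Let $\mathcal{K}$ denote the family of non-empty compact subsets of $\overline{B}$, equipped with the Hausdorff metric $d_H$, and quote the standard fact that $(\mathcal{K}, d_H)$ is a complete metric space. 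Continuity of the $\phi_k$ and invariance of $\overline{B}$ then make $\Phi$ a well-defined self-map of $\mathcal{K}$.

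The heart of the argument is the contraction estimate: $\Phi$ is a contraction on $(\mathcal{K}, d_H)$ with ratio $c := \max_k c_k < 1$. This rests on two short computations straight from the definition of $d_H$. The first is the union bound
$$d_H\!\left(\bigcup_{k} A_k,\, \bigcup_{k} B_k\right) \le \max_k d_H(A_k, B_k),$$
valid for any finite families $\{A_k\},\{B_k\} \subset \mathcal{K}$. The second is the Lipschitz transfer $d_H(\phi_k(A), \phi_k(B)) \le c_k\, d_H(A, B)$, immediate from the pointwise contraction hypothesis. Chaining them gives $d_H(\Phi(A), \Phi(B)) \le c\, d_H(A,B)$.

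Banach's fixed-point theorem then supplies a unique $F \in \mathcal{K}$ with $F = \Phi(F) = \bigcup_k \phi_k(F)$, which is the advertised attractor. The step most deserving of care is the preliminary set-up, namely securing the invariant ball $\overline{B}$ and citing completeness of the Hausdorff metric; once that scaffolding is in place, both the contraction estimate and the fixed-point conclusion are essentially routine. An alternative framing, avoiding the invariant ball altogether, is to work on the full space of non-empty compact subsets of $\mathbb{R}^2$ and observe that starting from any $K_0 \in \mathcal{K}$ the orbit $\{\Phi^n(K_0)\}$ is Cauchy because $d_H(\Phi^n(K_0), \Phi^{n+1}(K_0)) \le c^n\, d_H(K_0, \Phi(K_0))$; either route lands on the same unique fixed point.
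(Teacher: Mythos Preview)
Your argument is the standard Hutchinson--Banach proof and is correct. Note, however, that the paper does not supply its own proof of this theorem: it is quoted from Falconer's textbook (the citation \cite[Theorem 9.1]{FalcFrac}) and used as a black box, so there is no ``paper's proof'' to compare against. Your write-up is essentially the argument one finds in that reference.
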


In \cite{FalcFrac} we are given a relationship between the self-similarity and Hausdorff dimensions. 

\begin{theorem}{\cite[Theorem 9.3]{FalcFrac}}\label{thm:IFSdim}
If the IFS satisfies the open set condition, the Hausdorff dimension $dim_\mathcal{H}$ is given by solving for $s$ in the equation
\begin{equation}
	\sum_{k=1}^m c_k^s=1 \label{eq:SelfSimDim}
\end{equation}
where $c_k$ is the scaling ratio of the corresponding $\phi_k$ from the IFS.
\end{theorem}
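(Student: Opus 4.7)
The plan is to establish two matching inequalities, $\dim_\mathcal{H} F \le s$ and $\dim_\mathcal{H} F \ge s$, where $s$ is the unique positive solution of $\sum_{k=1}^m c_k^s = 1$. That such an $s$ exists and is unique follows from the fact that $t\mapsto \sum c_k^t$ is continuous and strictly decreasing, equal to $m$ at $t=0$ and tending to $0$ as $t\to\infty$. The attractor $F$ itself is already given by Theorem \ref{thm:attractor}, so only its dimension needs identification.

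For the upper bound I would use the natural hierarchical covers produced by iterating the self-similarity relation $F = \bigcup_k \phi_k(F)$. Applying it $n$ times gives
$$F = \bigcup_{(i_1,\ldots,i_n)\in\{1,\ldots,m\}^n} \phi_{i_1}\circ\cdots\circ\phi_{i_n}(F),$$
and since each $\phi_{i_j}$ is a similarity of ratio $c_{i_j}$, the block indexed by $(i_1,\ldots,i_n)$ has diameter $c_{i_1}\cdots c_{i_n}\,|F|$. Choosing $n$ large enough to force every diameter below $\delta$ yields
$$\mathcal{H}^s_\delta(F) \le |F|^s \sum_{(i_1,\ldots,i_n)} (c_{i_1}\cdots c_{i_n})^s = |F|^s \left(\sum_{k=1}^m c_k^s\right)^n = |F|^s,$$
so $\mathcal{H}^s(F) \le |F|^s < \infty$ upon letting $\delta \to 0$, and Definition \ref{def:HausDim} gives $\dim_\mathcal{H} F \le s$. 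This direction never invokes the open set condition.

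The lower bound is the genuinely non-trivial step and is where the open set condition of Definition \ref{def:OSC} enters. My approach is the mass distribution principle: construct a Borel probability measure $\mu$ supported on $F$ and a constant $C>0$ with $\mu(B(x,r)) \le C r^s$ for every ball; a standard argument then forces $\mathcal{H}^s(F) \ge \mu(F)/C > 0$, giving $\dim_\mathcal{H} F \ge s$. The measure $\mu$ is obtained by pushing forward the product probability measure on the symbol space $\{1,\ldots,m\}^{\mathbb{N}}$ that assigns mass $c_k^s$ to the symbol $k$ via the coding map sending $(i_1,i_2,\ldots)$ to the unique point of $\bigcap_n \phi_{i_1}\circ\cdots\circ\phi_{i_n}(F)$. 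For a ball $B(x,r)$ one truncates each symbolic branch at the first depth where the cylinder diameter drops below $r$, and then bounds $\mu(B(x,r))$ by the number of cylinders whose images meet $B(x,r)$ times $r^s$.

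The main obstacle is therefore the geometric counting lemma controlling that number of cylinders. Here the open set condition carries the weight: each image $\phi_{i_1}\circ\cdots\circ\phi_{i_n}(V)$ is a similar copy of the bounded open set $V$, and by iterating the containment $V \supset \bigcup_k \phi_k(V)$ the images at a fixed truncation level are pairwise disjoint and contain inscribed balls of radius proportional to $c_{i_1}\cdots c_{i_n}$. Disjoint open sets of such controlled size inside a slightly enlarged ball can exist only in bounded number by a volume-packing argument in $\mathbb{R}^2$, yielding the required constant. Once the counting lemma is in place, the mass distribution bound drops out and the two inequalities combine to give $\dim_\mathcal{H} F = s$.
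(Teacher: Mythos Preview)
Your argument is the standard one and is correct: natural covers for the upper bound, a self-similar measure plus the mass-distribution principle for the lower bound, with the open set condition entering only through the volume-packing lemma that bounds how many truncated cylinders can meet a ball of radius $r$. This is essentially Falconer's own proof of the result you are citing.

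Note, however, that the paper does not prove this statement at all: it is quoted as \cite[Theorem~9.3]{FalcFrac} and used as a black box. So there is no ``paper's own proof'' to compare against; your proposal simply supplies the omitted argument, and does so along the lines of the original source.
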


Because the Fibonacci curves are not nativley generated by an IFS we also need a notion in which to say sequences of curves have limits other than being the fixed set of an IFS. To this end we use the Hausdorff metric. 

\begin{definition}\label{def:HausMetric}
Let $\mathcal{H}$ be the collection of non-empty compact subsets of $\mathbb{R}^{2}$. Define for $U,V \in \mathcal{H}$
$$d_{\mathcal{H}}(U,V) = \inf_{x \in U} \sup_{y \in V} |x-y| + \inf_{y \in V} \sup_{x \in U} |x-y|.$$
\end{definition}

It is a well known fact that this metric space is complete \cite{PriceFrac}. See \cite[Section 2.10]{Federer1969} for a thorough discussion of the properties of the Hausforff metric space.

\section{Fibonacci Fractals}\label{sec:fibfrac}

In this section we are concerned with showing the existence and Hausdorff dimension of the Fibonacci fractals. The fractal itself is a scaling limit of the curves $\mathcal{F}_{n}^{[i]}$ along the subsequence $n=6k+5$ for even $i$ and $n=6k+3$ for odd $i$.  But to create a self-similar description of the fractal similar to the self-similar/five-partite property of the Fibonacci words we have to include $n=6k+2$ and $n=6k$ curves as well. Let the $i$-Fibonacci curves be drawn with line segments of length one. 

\begin{definition}\label{def:scalingratio}
Consider the distance between the initial and final point of a Fibonacci curve. The ratio of this length between $k$ and $k+1$ is the scaling ratio.
\end{definition}

\begin{theorem}\label{thm:angleScale}
For drawing angle $\alpha$, the scaling ratio of the curve is
$$ R = \left((1+\cos(\alpha)) +\sqrt{(1+\cos(\alpha))^{2} + 1}\right)^{-1}.$$
\end{theorem}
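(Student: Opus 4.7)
The plan is to derive a linear recurrence for the start-to-end distance $\ell_n$ of $\mathcal{F}_n^{[i]}$ and solve it. By the self-similarity of Proposition \ref{prop:iCurveProps}(2), the ratio $c := \ell_n/\ell_{n-3}$ is constant along the chosen subsequence, so the scaling ratio of Definition \ref{def:scalingratio} is $R = 1/c$ and the theorem reduces to computing $c$.

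To obtain the recurrence, I would decompose $\mathcal{F}_n^{[i]}$ via the five-partite structure of Proposition \ref{prop:iCurveProps}(4) and write the global end-to-end displacement as a sum of five planar vectors $\vec{v}_1,\ldots,\vec{v}_5$, one per sub-curve, each positioned and rotated according to the net drawing angle accumulated up to that block's starting vertex. Proposition \ref{prop:6angle} and Proposition \ref{prop:curveAngle} control those accumulated angles, and Proposition \ref{prop:boundForm} (each sub-curve's endpoints sit at adjacent bounding-box corners) fixes the direction of each $\vec{v}_j$ relative to the sub-curve's own bounding box. Together these should show that the middle vector $\vec{v}_3$, of magnitude $\ell_{n-6}$, lies along the global main axis from the first to the last vertex of $\mathcal{F}_n^{[i]}$, while the four outer vectors pair up symmetrically about this axis: two have zero inclination and two are inclined at $\pm\alpha$. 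Their perpendicular components cancel, and the axial contribution sums to $2(1+\cos\alpha)\,\ell_{n-3}$, giving the recurrence
\begin{equation*}
\ell_n \;=\; 2(1+\cos\alpha)\,\ell_{n-3} + \ell_{n-6}.
\end{equation*}
Dividing by $\ell_{n-3}$ and using $c = \ell_n/\ell_{n-3} = \ell_{n-3}/\ell_{n-6}$ yields $c^2 - 2(1+\cos\alpha)\,c - 1 = 0$, whose positive root is $c = (1+\cos\alpha) + \sqrt{(1+\cos\alpha)^2 + 1}$. Inverting gives $R = 1/c$, which matches the stated expression.

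The main obstacle will be the angle bookkeeping in the second step. The outer four blocks are not identical: two are $\mathcal{F}_{n-3}^{[i]}$'s and two are $\mathcal{F'}_{n-3}^{[i]}$'s, differing in their final two drawing moves. One has to check that the $ab \leftrightarrow ba$ swap at the end of the word for $\mathcal{F'}_{n-3}^{[i]}$ does not perturb the magnitude or the main-axis projection of the corresponding displacement vector — or at least that any correction combines across the paired blocks to still produce the clean coefficient $2(1+\cos\alpha)$ — and that the junction angle between consecutive blocks is really $\alpha$ rather than some other accumulated value. The palindrome/parity statements of Proposition \ref{prop:wordprops} and the bounding-box statement of Proposition \ref{prop:boundForm}, taken together, should supply the needed geometric symmetries, uniformly in $n$ and $i$.
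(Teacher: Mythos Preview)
Your approach is correct and essentially the same as the paper's: both derive the relation $c = 2 + 2\cos\alpha + 1/c$ (equivalently $c^2 - 2(1+\cos\alpha)c - 1 = 0$) from the five-partite trapezoidal arrangement of sub-curves and then solve for $c$ and invert. The paper's proof is simply terser, reading the equation directly off the trapezoid in Figure~\ref{fig:trap} rather than writing out the vector decomposition and angle bookkeeping that you (rightly) flag as the point needing care.
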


\begin{figure}[t]
\begin{center}
\includegraphics[width=3in]{./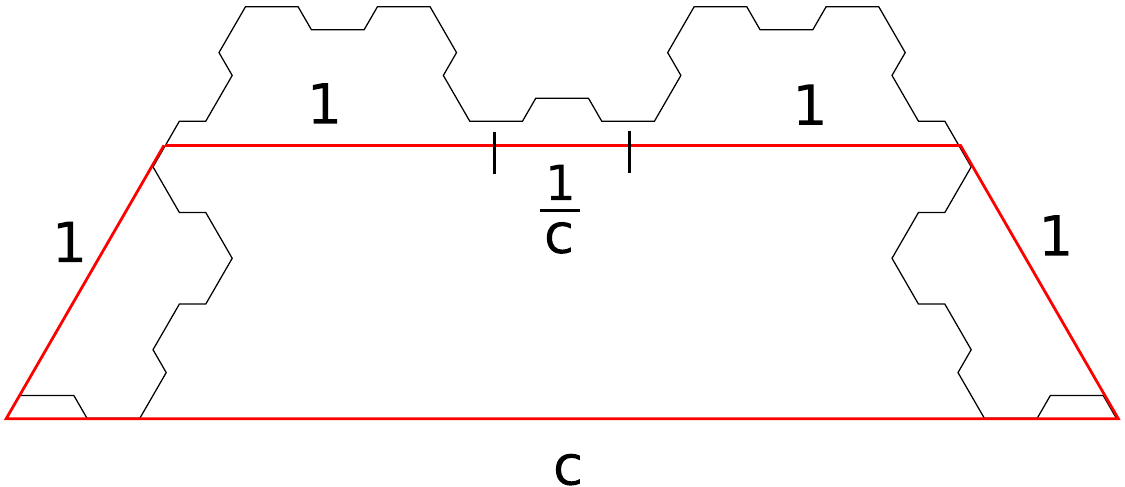}
\end{center}
\caption{For $\alpha \sim \pi/3$ a representative curve is drawn and the relationship between the widths of the five pieces of the fractal are determined to fix a scaling ratio.}\label{fig:trap}
\end{figure}

\begin{proof}
Consider the trapezoid in Figure \ref{fig:trap} describing the proportions of the curve and label the wider base of the trapezoid to have length $c$. Similarly, label the width of each of the $n-3$ sub-curves to have length 1, and finally the sub-curve $n-6$ to have length $\frac{1}{c}$. Here, $c$ is the scaling ratio from a previous curve to a larger curve. By the properties of trapezoids, the width of the larger base, $c$, can also be written as
$$c=2+\frac{1}{c}+2cos(\alpha)$$
Multiply through by $c$ to obtain a quadratic and solve via the quadratic formula:
$$c^2-2(1+cos(\alpha))c-1=0$$
$$c = \frac{2(1+\cos(\alpha)) +\sqrt{4(1+\cos(\alpha))^{2} + 4}}{2}$$
and because we are concerned with the scaling ratio from a larger curve to a smaller curve, we raise $c$ to the $-1$ and simplify to get our scaling ratio.
\end{proof}

\begin{definition}
Let $w^{[i]}_n$ be the Euclidean distance between the starting and ending vertices of $\mathcal{F}_n^{[i]}$ and $h_n$ the height of the curve in the direction perpendicular to the line connecting the first and last vertices of the curve. Notice that both $w$ and $h$ depend on the drawing angle, $n$, and $i$. The aspect ratio is defined as $\frac{w}{h}$. 
\end{definition}

Let $j=3k+2$ or $3k$ depending as $i$ is even or odd respectively. 
The scaling limit is achieved by taking the length of the individual line segments to be $w^{-1}$, thus the width of the scaled curves is always $1$. Theorem \ref{thm:CurveLimitExist} proves that the scaling limit exists as a limit in the Hausdorff metric space. The following proposition about the aspect ratio as $k$ increases will be a central piece to the proof of Theorems \ref{thm:CurveLimitExist} and \ref{thm:FracConverge}.

\begin{proposition}\label{prop:AspectRatio}
$$\lim_{k \rightarrow \infty} \frac{w^{[i]}_{j(k)}(\alpha)}{h^{[i]}_{j(k)}(\alpha)} = \frac{r_+ - 1}{\sin(\alpha)}=\frac{1-R}{R\sin(\alpha)}.  $$
\end{proposition}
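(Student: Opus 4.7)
The plan is to extract linear recurrences in $n \mapsto n-3$ for both $w^{[i]}_n$ and $h^{[i]}_n$ from the five-partite structure (Proposition \ref{prop:iCurveProps}(4)), solve them explicitly via characteristic roots, and let the leading exponential terms cancel when the ratio is taken. The width recurrence
$$w^{[i]}_n = 2(1+\cos\alpha)\, w^{[i]}_{n-3} + w^{[i]}_{n-6}$$
is already implicit in the trapezoid picture of Figure \ref{fig:trap}: the wide base decomposes into two horizontal sub-curves of width $w_{n-3}$, a middle sub-curve of width $w_{n-6}$, and two slanted projections $w_{n-3}\cos\alpha$. For the height I would read off from the same picture
$$h^{[i]}_n = h^{[i]}_{n-3} + w^{[i]}_{n-3}\sin\alpha,$$
since the trapezoid itself rises by $w_{n-3}\sin\alpha$ and the three collinear sub-curves (two copies of $\mathcal{F}_{n-3}^{[i]}$ and one of $\mathcal{F}_{n-6}^{[i]}$) along its narrow top contribute an additional $h_{n-3}$ upward.

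Restricting to the relevant subsequence and writing $W_k = w^{[i]}_{j(k)}$, $H_k = h^{[i]}_{j(k)}$, the width recurrence becomes $W_k = 2(1+\cos\alpha)\, W_{k-1} + W_{k-2}$, whose characteristic equation $x^2 - 2(1+\cos\alpha)x - 1 = 0$ has roots
$$r_\pm = (1+\cos\alpha) \pm \sqrt{(1+\cos\alpha)^{2}+1},$$
with $r_+ = 1/R$ by Theorem \ref{thm:angleScale} and $r_+ r_- = -1$, so $|r_-|<1$. Hence $W_k = A\, r_+^k + B\, r_-^k \sim A\, r_+^k$ for explicit $A>0$. The height relation telescopes to $H_k = H_0 + \sin\alpha \sum_{m=0}^{k-1} W_m$, and summing the geometric pieces yields $H_k = \tfrac{A\sin\alpha}{r_+-1}\, r_+^k + O(1)$. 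Dividing and sending $k \to \infty$,
$$\lim_{k \to \infty} \frac{W_k}{H_k} = \frac{r_+ - 1}{\sin\alpha} = \frac{1-R}{R\sin\alpha},$$
as required.

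The algebra is routine; the step I expect to be the main obstacle is the geometric justification of the height recurrence. One must verify that the two slanted side pieces never reach higher than the three pieces atop the trapezoid (they reach at most $w_{n-3}\sin\alpha + h_{n-3}\cos\alpha$, which is dominated by $w_{n-3}\sin\alpha + h_{n-3}$), and that the five sub-pieces are oriented consistently so their bounding boxes stack into the claimed trapezoidal layout rather than folding back on themselves. Propositions \ref{prop:boundForm} and \ref{prop:boundOverlap} provide the endpoint-to-corner alignment and interior-disjointness needed for this, but one still has to track separately the cases $j(k)=3k+2$ (even $i$) and $j(k)=3k$ (odd $i$) together with the swap between $\mathcal{F}$ and $\mathcal{F}'$; this is bookkeeping that does not alter the asymptotic analysis above.
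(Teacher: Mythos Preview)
Your proposal is correct and follows essentially the same route as the paper: derive the width recurrence $W_k = 2(1+\cos\alpha)W_{k-1}+W_{k-2}$ and height recurrence $H_k = H_{k-1}+W_{k-1}\sin\alpha$ from the five-partite trapezoid picture, solve the width recurrence via the characteristic roots $r_\pm$, telescope the height into a geometric sum, and take the ratio. Your write-up is in fact a bit more careful than the paper's (you correctly note $r_+r_-=-1$ so that $r_-$ is negative with $|r_-|<1$, and you flag the geometric bookkeeping needed to justify the height recurrence), but the argument is the same.
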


\begin{proof}
Consider the two relations between the ``height'' and ``width'' of the $\mathcal{F}_{j(k)}^{[i]}$:
\begin{align}
	\label{eq:wk}w_k &= 2w_{k-1} + w_{k-2} + 2\cos(\alpha)w_{k-1}\\ 
	\label{eq:hk}h_k &= h_{k-1} + \sin(\alpha)w_{k-1}.
\end{align}
By using the width formula repeatedly in the height formula we can arrive at a more useful formula for the height. That is
\begin{align*}
	h_k &= h_1 + \sin(\alpha) \left( \sum_{q=1}^{k-1} w_q \right).
\end{align*}
Returning for the moment to (\ref{eq:wk}), by \cite[Theorem 4.10]{Niven1991} it is possible to give a closed form expression for $w_{j(k)}$ in the form $$w_{j(k)} = ar_+^{n} + br_-^{n}$$ where $r_+$ and $r_-$ are the roots of the polynomial $r^{2} - 2(1+\cos(\alpha))r - 1 = 0$. That is $$r_{\pm} = 1+\cos(\alpha) \pm \sqrt{2\cos(\alpha)}.$$ It is important to observe that $0<r_- <1 < r_+$ for $\alpha \in [0,\pi/2)$ and $r_- = r_+ = 1$ when $\alpha = \pi/2$.
Thus we can compute 
\begin{align*}
	\lim_{k \rightarrow \infty} \frac{w^{[i]}_{j(k)}(\alpha)}{h^{[i]}_{j(k)}(\alpha)} &= \lim_{k \rightarrow \infty} \frac{ar_+^{k}+ br_-^{k}}{h_1 + \sin(\alpha)\left( \sum_{q=1}^{k-1}ar_+^{q}+ br_-^{q} \right)}\\
	&=  \lim_{k \rightarrow \infty} \frac{ar_+^{k}+ br_-^{k}}{h_1 + \sin(\alpha)\left(ar_+\left(\frac{1-r_+^{k-1}}{1-r_+}\right) + br_-\left(\frac{1-r_-^{k-1}}{1-r_-}\right)\right)}\\
	&= \frac{r_+ - 1}{\sin(\alpha)}.
\end{align*}
The statement of the proposition arises from the relation between $r_+$ and $R$.
\end{proof}

\subsection{Existence}\label{ssec:existence}
There are two central existence problems that need to be addressed about Fibonacci fractals. The first is the existence of a scaling limit of the Fibonacci curves for a given $i$ and (sub)-sequence of $n$. The second is if it is possible to represent this limit curve as a self-similar set with a particular IFS. Doing so will enable the standard argument for computing Hausdorff dimension by way of computing the self-similarity (fractal) dimension.

\begin{theorem}\label{thm:CurveLimitExist}
For drawing angle $\alpha \in (0,\pi/2]$ there exists a scaling limit along a subsequence of the Fibonacci curves. For even $i$, the scaling limit of $\mathcal{F}_{6k+5}^{[i]}$ exists, for odd $i$ the scaling limit of $\mathcal{F}_{6k+3}^{[i]}$ exists. The scaling factors are  $\sqrt{2}/w_{6k+5}$ and $\sqrt{2}/w_{6k+3}$ respectively for the parity of $i$.
\end{theorem}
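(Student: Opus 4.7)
The plan is to exploit completeness of the Hausdorff metric space (noted after Definition \ref{def:HausMetric}) by showing that the rescaled subsequence $G_k := (\sqrt{2}/w_{j(k)}^{[i]})\cdot \mathcal{F}_{j(k)}^{[i]}$ is Cauchy in $(\mathcal{H},d_\mathcal{H})$. I will do this by constructing a candidate iterated function system whose attractor the $G_k$ should approach, and then verifying that iterating the IFS ``almost commutes'' with incrementing the index by $3$.

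First I would use the five-partite structure of Proposition \ref{prop:iCurveProps}(4) to define an IFS $\Phi=\{\phi_1,\phi_2,\phi_3,\phi_4,\phi_5\}$ on $\mathbb{R}^2$: each $\phi_\ell$ is a similarity whose isometric part encodes the rotation needed to attach the $\ell$-th sub-curve in the correct position. Four of these similarities have contraction ratio $R$ (from Theorem \ref{thm:angleScale}), reflecting scaling between $\mathcal{F}_n^{[i]}$ and $\mathcal{F}_{n-3}^{[i]}$, while the middle map $\phi_3$ has ratio $R^2$ corresponding to the $\mathcal{F}_{n-6}^{[i]}$ piece. The translations are fixed by requiring the images to be stacked in the trapezoidal arrangement of Figure \ref{fig:trap}. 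Since $0<R<1$ for $\alpha\in(0,\pi/2]$, the induced set map $K\mapsto\bigcup_\ell \phi_\ell(K)$ is a strict contraction on the complete space $(\mathcal{H},d_\mathcal{H})$ with ratio at most $R$, and Theorem \ref{thm:attractor} produces a unique attractor, which will turn out to be the sought scaling limit.

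Next I would show an approximate semi-conjugacy $d_\mathcal{H}(G_{k+1},\Phi(G_k))\le \varepsilon_k$ with $\varepsilon_k\to 0$. The five-partite decomposition provides an exact combinatorial assembly of $\mathcal{F}_{j(k+1)}^{[i]}$ from two copies of $\mathcal{F}_{j(k)}^{[i]}$, one copy of $\mathcal{F}_{j(k-1)}^{[i]}$, and two copies of the modified $\mathcal{F}'_{j(k)}^{[i]}$, so two sources of error must be controlled: first, the scaling factors $w_{j(k)}^{[i]}/w_{j(k+1)}^{[i]}$ are not exactly $R$ but approach it geometrically (this is exactly the content of the closed-form calculation in the proof of Proposition \ref{prop:AspectRatio}, where $w_{j(k)}=ar_+^{k}+br_-^{k}$ and $r_- < 1$); second, $\mathcal{F}'^{[i]}_n$ differs from $\mathcal{F}_n^{[i]}$ in only the last two segments, an unrescaled discrepancy of $O(1)$ that becomes $O(1/w_{j(k)}^{[i]})$ after rescaling. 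Both errors vanish as $k\to\infty$, giving $\varepsilon_k\to 0$. Iterating the contraction then yields
\begin{equation*}
d_\mathcal{H}(G_{k+m},\Phi^{\circ m}(G_k))\le \sum_{\ell=0}^{m-1} R^{\ell}\,\varepsilon_{k+m-1-\ell},
\end{equation*}
which combined with $\Phi^{\circ m}(G_k)\to \mathcal{F}_\alpha^{[i]}$ gives $G_k\to \mathcal{F}_\alpha^{[i]}$ in the Hausdorff metric. The scaling factor $\sqrt{2}/w_{j(k)}^{[i]}$ is precisely what is needed for the diameters of the $G_k$ to stabilize and for the IFS translations to be consistent across $k$; this fixes the normalization stated in the theorem.

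The hard part will be quantifying $\varepsilon_k$, because one must simultaneously rotate, translate, and rescale the five sub-curve pieces to match the five $\phi_\ell$-images, and then argue uniformly over those five comparisons. In particular, Proposition \ref{prop:boundOverlap} is essential here: it ensures that the five sub-curve bounding boxes are laid out with disjoint interiors, so the assembly error really is additive piece-by-piece and not compounded by overlaps. I expect everything else (existence of the attractor, completeness, Cauchy $\Rightarrow$ convergent) to be routine once that error bound is in place.
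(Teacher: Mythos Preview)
Your approach is valid and genuinely different from the paper's. The paper argues Cauchy directly: it compares the rescaled $\mathcal{F}_{6k+5}^{[i]}$ with $\mathcal{F}_{6k+11}^{[i]}$, using Proposition~\ref{prop:AspectRatio} to pin the heights of the bounding boxes and then a mesh-size argument to control the curves inside them. You instead set up the IFS first, show the rescaled curves form an approximate orbit under $\Phi$, and let the Banach contraction of $\Phi$ on $(\mathcal{H},d_{\mathcal{H}})$ do the work. Your route has the pleasant side effect of identifying the limit with the IFS attractor in the same stroke, which the paper does separately in Theorem~\ref{thm:CurveIFSequiv}. Two small corrections: the appeal to Proposition~\ref{prop:boundOverlap} is not really needed for the $\varepsilon_k$ estimate, since Hausdorff distance of a union is controlled by the maximum over matched pieces regardless of overlap; and your summability argument for $\sum_{\ell} R^{\ell}\varepsilon_{k+m-1-\ell}\to 0$ should be spelled out (split at a threshold $J$ with $\varepsilon_j<\delta$ for $j\ge J$).

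There is one indexing issue you should straighten out. You write that one application of $\Phi$ corresponds to incrementing the curve index by $3$, and you match $G_{k+1}$ with $\Phi(G_k)$; but the theorem's subsequence has step $6$, and this is not cosmetic. By Proposition~\ref{prop:6angle} the net angle of $\mathcal{F}_{n-3}^{[i]}$ flips sign when $n$ increases by $3$, so the geometric placement of the five sub-curves in the five-partite decomposition alternates between two mirror-image configurations. A single fixed IFS $\Phi$ therefore does not model the step-$3$ recursion; the rescaled step-$3$ sequence oscillates between two reflected shapes rather than converging. The clean fix is to take the step-$6$ subsequence as stated and compare $G_{k+1}$ with $\Phi^{\circ 2}(G_k)$ (equivalently, build the IFS from two nested applications of the five-partite structure). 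With that adjustment your argument goes through.
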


The choice of scaling $\mathcal{F}_{6k+5}^{[i]}$ to have width $\sqrt{2}$ is to maintain consistency with the scaling used in \cite{MonnFrac}. Any other value would of course make no difference in the following proof, however the aspect ratio would still limit to $\sqrt{2}$.

\begin{proof}
Consider the case when $i$ is even. The odd case is similar. We compute the Hausdorff distance between scaled copies of $\mathcal{F}_{6k+5}^{[i]}$ and $\mathcal{F}_{6k+11}^{[i]}$ to show that the sequence is Cauchy and thus has a limit, $\mathcal{F}^{[i]}_{\alpha}$. First $\mathcal{F}_{6k+11}^{[i]}$ is composed of $4$ copies of $\mathcal{F}_{6k+5}^{[i]}$ and a single copy of $\mathcal{F}_{6k-1}^{[i]}$, scaled so that $w_{6k+11} = \sqrt{2}$ so $$h_{6k+11} = \frac{\sqrt{2}}{w_{6k+11}}(h_{6k+5} + \sin(\alpha)w_{6k+5})$$ which for large enough $k$ is within any given $\epsilon>0$ of $1$ by Proposition \ref{prop:AspectRatio}.

Thus each of the five parts of $\mathcal{F}_{6k+11}^{[i]}$ has a bounding box with a Hausdorff distance of $\epsilon$ of the corresponding bounding box of $\mathcal{F}_{6k+5}^{[i]}$. This is because with the widths fixed to the appropriate length the bounding boxes at each level share at least one common vertex and orientation with only their heights varying. 

Now consider the portions of the curves within one of those five bounding boxes. Consider the first and last vertex of $\mathcal{F}_{5}^{[i]}$. As $\mathcal{F}_{6k+5}^{[i]}$ are recursively constructed and scaled the images of the two points has a mesh size that is no more than $\sqrt{2}^{3-k}$ which goes to zero as $k$ grows. Since the difference between $\mathcal{F}_{6k+5}^{[i]}$ and $\mathcal{F}_{6k+11}^{[i]}$ between two adjacent images of the first and last points is that while $\mathcal{F}_{6k+5}^{[i]}$ has an $\mathcal{F}_5^{[i]}$ connecting them $\mathcal{F}_{6k+11}^{[i]}$ has a $\mathcal{F}_{11}^{[i]}$. That is a bounded number of extra edges and so the distance traveled by the excursion is a bounded multiple of $\sqrt{2}^{3-k}$. For $k$ large enough this will be less than $\epsilon$. Thus the Hausdorff distance between the scaled copies of $\mathcal{F}_{6k+5}^{[i]}$ and $\mathcal{F}_{6k+11}^{[i]}$ is less than any given $\epsilon$ for $k$ large enough. Since the Hausdorff metric space is complete there exists a scaling limit set. 
\end{proof}

For $i$ even let the IFS be the five contractive similarities that map the large trapezoid onto the five similar trapezoids in Figures \ref{fig:PhiBox} and \ref{fig:IFS} such that the image of the $x-$axis falls onto the dark line. For odd $i$, use the same five maps composed with a clockwise rotation of $\pi/4$. The maps are referred to as $\phi_i$, $i=1,2,3,4,5$.

The following lemma follows directly from this definition of the IFSs by choosing $V$ to be the interior of the largest trapezoid in Figure \ref{fig:IFS}. In Proposition \ref{prop:boundOverlap} it was shown that the Fibonacci curves themselves satisfy the open set condition so that it is expected that an IFS we choose should as well. 

\begin{proposition}\label{prop:OSCsatisfied}
For both parities of $i$, the IFSs described both satisfy the open set condition.   
\end{proposition}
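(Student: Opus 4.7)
The plan is to take $V$ to be the open interior of the large trapezoid in Figure \ref{fig:IFS} and verify the two requirements of the open set condition: that $\bigcup_{k=1}^{5}\phi_k(V) \subset V$, and that the images $\phi_k(V)$ have pairwise disjoint interiors. The former is essentially definitional: by construction each $\phi_k$ is the similarity that sends the large trapezoid onto one of the five sub-trapezoids drawn strictly inside the large trapezoid (so $\phi_k(\overline{V})\subset\overline{V}$, and since each $\phi_k$ is a strict contractive similarity, $\phi_k(V)\subset V$). So the real content is the disjointness of the images.

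For the disjointness, the plan is to invoke Proposition \ref{prop:boundOverlap} directly. The five maps $\phi_1,\dots,\phi_5$ are precisely the similarities whose images of $V$ are the bounding trapezoids of the five sub-pieces appearing in the five-partite decomposition $\mathcal{F}^{[i]}_{n}=\mathcal{F}^{[i]}_{n-3}\mathcal{F}^{[i]}_{n-3}\mathcal{F}^{[i]}_{n-6}\mathcal{F'}^{[i]}_{n-3}\mathcal{F'}^{[i]}_{n-3}$ after passing to the scaling limit. Proposition \ref{prop:boundOverlap} shows, for every finite curve at level $n$, that the five bounding boxes of the sub-curves have pairwise disjoint interiors; in the scaling limit (using Proposition \ref{prop:AspectRatio} to pin down the limiting aspect ratio, and hence the limiting shape of the bounding trapezoids), this disjointness is inherited by the five trapezoids $\phi_k(V)$.

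For the odd-$i$ case the IFS is obtained from the even-$i$ IFS by post-composing with a rigid rotation by $\pi/4$. Since the open set condition is invariant under a common conjugation by an isometry, one simply takes $V$ to be the rotated trapezoid and the same verification goes through verbatim.

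The main obstacle is really packaged into Proposition \ref{prop:boundOverlap}, which has already been proved. Beyond invoking it, the only subtlety is making sure that the limiting trapezoid $V$ actually has the correct proportions so that the five sub-trapezoids fit strictly inside with disjoint interiors; this requires noting that the widths satisfy the recursion \eqref{eq:wk} whose fixed-point ratio is exactly the scaling ratio $R$ of Theorem \ref{thm:angleScale}, and that the limiting aspect ratio $(1-R)/(R\sin\alpha)$ from Proposition \ref{prop:AspectRatio} is precisely the aspect ratio used to construct the trapezoid in Figure \ref{fig:IFS}. Once these consistency checks are recorded, the open set condition follows.
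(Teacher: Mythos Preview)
Your proposal is correct and follows the same approach as the paper: take $V$ to be the interior of the large trapezoid in Figure~\ref{fig:IFS}, note that by construction the five image trapezoids lie inside with disjoint interiors, and invoke Proposition~\ref{prop:boundOverlap} for the disjointness. If anything, you supply more detail than the paper, which treats the result as an immediate consequence of the definition of the IFS and simply points to Proposition~\ref{prop:boundOverlap} as justification.
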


\begin{figure}[t]
\begin{center}
\includegraphics[width=3in]{./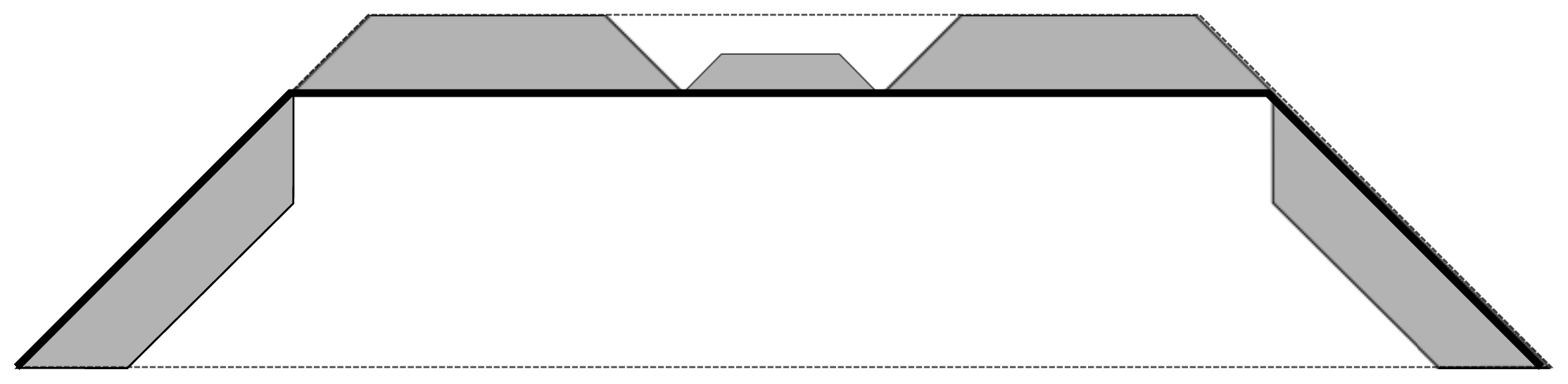}
\caption{Bounding trapezoids for $\alpha \neq \pi/2$.}
\label{fig:IFS}
\end{center}
\end{figure}

\begin{proposition}\label{prop:IFSattractor}
There exists a unique attractor for both of the above IFS for $\alpha \in (0,\pi/2]$.
\end{proposition}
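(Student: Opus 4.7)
The plan is to apply Theorem \ref{thm:attractor} directly: since that theorem guarantees a unique attractor for any IFS whose maps are strict contractions of $\mathbb{R}^2$, it suffices to verify that each of the five similarities $\phi_1,\ldots,\phi_5$ has Lipschitz constant strictly less than $1$ for every $\alpha \in (0,\pi/2]$.

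First I would identify the contraction ratios of the $\phi_k$ from the geometry in Figures \ref{fig:PhiBox} and \ref{fig:IFS}. Four of the maps send the large trapezoid onto a trapezoid similar to it at the scale of $\mathcal{F}_{n-3}^{[i]}$ inside $\mathcal{F}_n^{[i]}$, so their ratios equal the scaling ratio $R$ computed in Theorem \ref{thm:angleScale}. The remaining map sends the large trapezoid onto the middle piece, which corresponds to $\mathcal{F}_{n-6}^{[i]}$ and therefore has ratio $R^2$. Thus the list of contraction ratios is $(R,R,R^2,R,R)$.

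Next I would check that $R<1$ uniformly on $(0,\pi/2]$. Since $\cos(\alpha)\in[0,1)$ on this interval, we have $1+\cos(\alpha)\ge 1$, so
\[
R = \bigl((1+\cos(\alpha)) + \sqrt{(1+\cos(\alpha))^2+1}\bigr)^{-1} \le (1+\sqrt{2})^{-1} < 1,
\]
and consequently $R^2<1$ as well. Each $\phi_k$ is therefore a contractive similarity of $\mathbb{R}^2$, and Theorem \ref{thm:attractor} yields a unique compact attractor for the even-$i$ IFS.

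Finally, for odd $i$ the five maps are obtained from the even-$i$ maps by post-composition with the isometry given by clockwise rotation through $\pi/4$; this does not alter any Lipschitz constant, so the odd-$i$ IFS consists of the same contractions (in Lipschitz norm) and Theorem \ref{thm:attractor} again applies. No step of this argument poses a genuine obstacle: the only mild subtlety is ensuring that the endpoint $\alpha=\pi/2$ is included, and this is handled by the explicit bound $R\le(1+\sqrt{2})^{-1}$ which is attained precisely there.
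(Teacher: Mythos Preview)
Your proposal is correct and follows essentially the same approach as the paper's own proof: identify the contraction ratios of the five maps as $R$ and $R^2$, observe that both are strictly less than $1$ on $(0,\pi/2]$, and invoke Theorem~\ref{thm:attractor}. Your version is in fact more careful than the paper's, since you give an explicit uniform bound $R\le(1+\sqrt{2})^{-1}$ and treat the odd-$i$ rotation explicitly.
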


\begin{proof}
Notice that all five of the $\phi$ have contraction ratio either $R^{-1}$ or $R^{-2}$ (see Theorem \ref{thm:angleScale}) both of which are less than one for all $\alpha \in (0,\pi/2]$. So by Theorem \ref{thm:attractor} there is a unique attractor.
\end{proof}

There is no a priori guarantee that $\mathcal{F}^{[i]}_{\alpha}$ is equal to the attractor of the appropriate IFS. However it is.

\begin{theorem}\label{thm:CurveIFSequiv}
The attractor of the IFS corresponding to the parity of $i$ and value of $\alpha \in (0,\pi/2]$ is the equal to $\mathcal{F}^{[i]}_{\alpha}$.
\end{theorem}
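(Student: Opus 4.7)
The plan is to show that the scaling limit $\mathcal{F}^{[i]}_\alpha$ satisfies the self-similarity relation $\mathcal{F}^{[i]}_\alpha = \Phi(\mathcal{F}^{[i]}_\alpha)$, where $\Phi(K) = \bigcup_{j=1}^5 \phi_j(K)$ is the Hutchinson operator of the IFS. Since $\mathcal{F}^{[i]}_\alpha$ is non-empty and compact (as the Hausdorff limit produced in Theorem \ref{thm:CurveLimitExist}) and the attractor is unique by Proposition \ref{prop:IFSattractor}, this identification forces $\mathcal{F}^{[i]}_\alpha$ to be the attractor.

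I will work in detail only with the even $i$ case, where the primary subsequence is $n=6k+5$; the odd $i$ case follows by the same argument composed with the global rotation by $\pi/4$ baked into the odd IFS. Let $\mathcal{C}_k$ denote the rescaling of $\mathcal{F}_{6k+5}^{[i]}$ to width $\sqrt{2}$, so that $\mathcal{C}_k \to \mathcal{F}^{[i]}_\alpha$ in the Hausdorff metric. The five-partite structure (Proposition \ref{prop:iCurveProps}) together with Proposition \ref{prop:boundOverlap} gives an essentially disjoint decomposition
$$
\mathcal{C}_k \;=\; \bigcup_{j=1}^{5} \tilde\phi_j^{(k)}\!\bigl(\mathcal{D}_j^{(k)}\bigr),
$$
where four of the $\mathcal{D}_j^{(k)}$ are rescaled copies of $\mathcal{F}_{6k+2}^{[i]}$ or its reflected variant $\mathcal{F}'$, and the fifth is a rescaled copy of $\mathcal{F}_{6k-1}^{[i]}$; the $\tilde\phi_j^{(k)}$ are the concrete similarities that place each sub-bounding-trapezoid inside the big bounding trapezoid. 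By Proposition \ref{prop:AspectRatio} the sub-trapezoid proportions stabilise as $k \to \infty$, so each $\tilde\phi_j^{(k)}$ converges to a similarity $\phi_j$ with contraction ratio $R$ (for $j\in\{1,2,4,5\}$) or $R^2$ (for $j=3$); these are precisely the IFS maps of Figure \ref{fig:IFS}.

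It remains to show that each $\mathcal{D}_j^{(k)}$ converges in the Hausdorff metric to $\mathcal{F}^{[i]}_\alpha$ — possibly a reflected copy, with the reflection absorbed into $\phi_j$. For the $\mathcal{F}_{6k-1}^{[i]}$ piece this is immediate, since $6k-1=6(k-1)+5$ already lies on the primary subsequence. For the $\mathcal{F}_{6k+2}^{[i]}$ pieces one adapts the Cauchy argument of Theorem \ref{thm:CurveLimitExist} to the residue class $n\equiv 2 \pmod 6$ to extract a scaling limit, and then uses Propositions \ref{prop:wordprops}(2--3) and \ref{prop:6angle} to identify that limit as the reflection of $\mathcal{F}^{[i]}_\alpha$ across the chord joining its endpoints. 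The primed sub-pieces $\mathcal{F}'$ in positions $j=4,5$ of the five-partite decomposition absorb this reflection, so that each $\phi_j(\mathcal{F}^{[i]}_\alpha)$ places the correct geometric object in the correct sub-trapezoid. Passing to the limit $k \to \infty$ on both sides of the decomposition then yields $\mathcal{F}^{[i]}_\alpha = \Phi(\mathcal{F}^{[i]}_\alpha)$, and uniqueness of the attractor completes the argument.

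The main obstacle I anticipate is this last identification of the scaling limit along the auxiliary subsequence $n \equiv 2 \pmod 6$ as a rigid-motion image of $\mathcal{F}^{[i]}_\alpha$. This is ultimately a bookkeeping exercise reconciling the $\mathcal{F}'$ notation of Proposition \ref{prop:iCurveProps}(4) with how the sign flip of Proposition \ref{prop:6angle} interacts with the palindromic structure of Proposition \ref{prop:wordprops}(2), but the underlying geometric statement is precisely what the IFS pictured in Figure \ref{fig:IFS} was designed to encode.
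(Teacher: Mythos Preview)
Your proposal is correct and follows essentially the same strategy as the paper: show that the scaling limit $\mathcal{F}^{[i]}_\alpha$ is invariant under the Hutchinson operator $\Phi$ and then invoke uniqueness of the attractor. The paper argues this invariance directly at the limit level, asserting that the five-partite self-similar structure of the curves persists in $\mathcal{F}^{[i]}_\alpha$ with the correct scaling ratios (via Theorem~\ref{thm:angleScale} and Proposition~\ref{prop:AspectRatio}) and the correct placements (via Propositions~\ref{prop:6angle} and~\ref{prop:curveAngle}); you instead decompose at each finite stage $k$ and pass each piece to the limit separately, which forces you to confront the auxiliary subsequence $n\equiv 2\pmod 6$ explicitly. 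That extra step---identifying the limit along $6k+2$ as a reflected copy of $\mathcal{F}^{[i]}_\alpha$---is a point the paper glosses over, so your route is slightly more careful but not genuinely different in spirit.
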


\begin{proof}
Fix an $i$ and $\alpha$. Let $\mathcal{F}^{[i]}_{\alpha}$ be the scaling limit and apply the IFS to it, i.e. consider $\bigcup_{i=1}^{5} \phi_i(\mathcal{F}^{[i]}_{\alpha})$. Because the scaling limit is already invariant under the scaling process, replacing the curve with five scaled copies of the curve so that the width remains $\sqrt{2}$ we can use Theorem \ref{thm:angleScale} to say that the scaled copies are scaled by exactly $R$ and $R^{2}$. Again since the scaling limit is as we said invariant under the scaling operation we note that Proposition \ref{prop:AspectRatio} that the aspect ratio remains unchanged. Thus the scaling operation acts on $\mathcal{F}^{[i]}_{\alpha}$ as an IFS with the five maps, the same scaling ratios, and general ``U-shaped'' pattern as the IFS defined above. 

To show that they are the same IFS we refer back to Propositions \ref{prop:6angle} and \ref{prop:curveAngle} which together imply that since $\mathcal{F}^{[i]}_{\alpha}$ has initial point at the origin that the five sub-curves are in exactly the same alignment as shown in Figure \ref{fig:IFS}. The previous paragraph shows they are the same size as well. Thus $\mathcal{F}^{[i]}_{\alpha}$ is invariant under the IFS, so by the uniqueness of the attractor $\mathcal{F}^{[i]}_{\alpha}$ is the attractor. 
\end{proof}

It should be noted what happens when $\alpha=0$. In this case the apsect ratio is $0$ and the scaling limit becomes a line segment in the direction $\alpha$ of length $\sqrt{2}$. The fractal degenerates to a line. 

With these results, we now know enough to properly calculate the Hausdorff dimension of the attractor of our IFS which in turn is the same dimension as that of the curve $\mathcal{F}^{[i]}$.



\subsection{Dimension}\label{ssec:dimension}

We are now in a position to state the first significant result of the paper, the Hausdorff dimension as a function of drawing angle. It is noteworthy that the dimension, in fact the fractals themselves do not depend on $i$ in any way. Thus the value of $i$ is only of combinatorial interest.

\begin{figure}[t]
\begin{center}
\includegraphics[width=3in]{./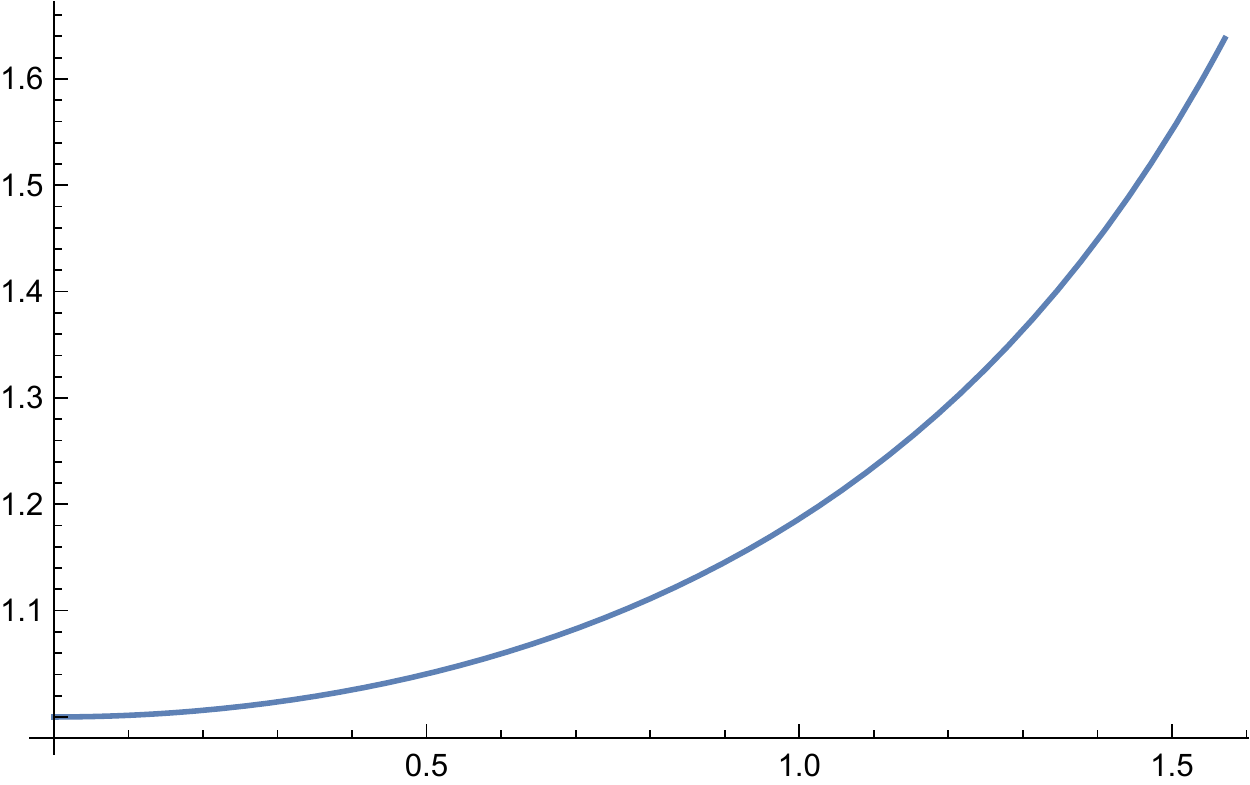}
\label{fig:dim}
\caption{The Hausdorff dimension of $\mathcal{F}_{\alpha}^{[i]}$ as a function of $\alpha$. The dimension is actually independent of $i$ whose parity only determines the exact embedding of $\mathcal{F}_{\alpha}^{[i]}$ in $\mathbb{R}^{2}$. Notice that the limit as $\alpha \rightarrow 0$ is $1$.}
\end{center}
\end{figure}

\begin{proposition}\label{prop:angleDimH}
For drawing angle $\alpha \in [0,\pi/2]$ the Hausdorff dimension of the fractal is given by
$$s=\frac{\ln(-2+\sqrt{5})}{\ln(R)}$$
\end{proposition}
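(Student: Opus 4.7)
The plan is to invoke Theorem \ref{thm:IFSdim} directly: since Proposition \ref{prop:OSCsatisfied} gives the open set condition and Theorem \ref{thm:CurveIFSequiv} identifies $\mathcal{F}^{[i]}_{\alpha}$ with the attractor of the five-map IFS, the Hausdorff dimension is the unique $s$ solving $\sum_{k=1}^{5} c_k^s = 1$, where the $c_k$ are the contraction ratios of $\phi_1,\dots,\phi_5$. So the whole task reduces to reading off the five contraction ratios and solving a single algebraic equation.

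First I would extract the ratios from the five-partite structure combined with Theorem \ref{thm:angleScale}. Recall that
$$\mathcal{F}_n^{[i]}=\mathcal{F}_{n-3}^{[i]}\mathcal{F}_{n-3}^{[i]}\mathcal{F}_{n-6}^{[i]}\mathcal{F'}_{n-3}^{[i]}\mathcal{F'}_{n-3}^{[i]},$$
and the scaling ratio from level $n$ to level $n-3$ is $R$. Thus four of the five sub-curves (the first, second, fourth, and fifth blocks) are scaled by $R$, while the middle block, which is at level $n-6$, is scaled by $R^{2}$. The maps $\mathcal{F}'$ differ from $\mathcal{F}$ only by a reorientation (swapping the last two letters), so they are still similarities with ratio $R$; the rotation by $\pi/4$ used in the odd-$i$ case does not alter contraction ratios either.

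Next I would substitute these ratios into the dimension equation to obtain
\begin{equation*}
4R^{s} + R^{2s} = 1.
\end{equation*}
Setting $x=R^{s}$, this becomes the quadratic $x^{2}+4x-1=0$, whose roots are $x=-2\pm\sqrt{5}$. Since $R\in(0,1)$ for $\alpha\in(0,\pi/2]$, we need $x=R^{s}>0$, which selects $x=-2+\sqrt{5}$. Taking logarithms and solving for $s$ gives
\begin{equation*}
s=\frac{\ln(-2+\sqrt{5})}{\ln R},
\end{equation*}
as claimed. For the boundary case $\alpha=0$, I would observe separately (as already noted in the text) that the fractal degenerates to a line segment of dimension $1$, consistent with the limit of the formula as $R\to 1/3$ from the relevant side.

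The main obstacle is really bookkeeping rather than analysis: one has to be certain that exactly one of the five sub-curves is at scale $R^{2}$ and the other four at scale $R$, and that the image under each $\phi_{k}$ in the IFS of Figure \ref{fig:IFS} genuinely has the same scaling ratio as the corresponding sub-curve of $\mathcal{F}_{n}^{[i]}$. Both facts follow from Theorem \ref{thm:angleScale} and the identification in Theorem \ref{thm:CurveIFSequiv}, so once those are cited the proof collapses to the quadratic computation above.
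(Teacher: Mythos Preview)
Your proof is correct and follows essentially the same route as the paper: cite the IFS identification and the open set condition, read off four maps of ratio $R$ and one of ratio $R^{2}$, solve the quadratic $x^{2}+4x-1=0$ for $x=R^{s}$, and take logarithms. If anything, you are slightly more careful than the paper in explicitly invoking Proposition~\ref{prop:OSCsatisfied} and in justifying the choice of the positive root.

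One small slip in your aside on the boundary case: as $\alpha\to 0$ the ratio $R$ tends to $(2+\sqrt{5})^{-1}=-2+\sqrt{5}$, not to $1/3$. With that correction your remark recovers $s\to 1$ directly, since $R^{s}=-2+\sqrt{5}=R$ forces $s=1$; this is exactly the content of Corollary~\ref{cor:angledim}.
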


\begin{proof}
Fix $i$ and $\alpha$. Here we use $R$ and $R^{2}$ to denote the scaling ratios, obtained from Theorem \ref{thm:angleScale}. These are also the scaling ratios of the IFS by Theorem \ref{thm:CurveIFSequiv}. Using the characterization of Hausdorff dimension in  Theorem \ref{thm:IFSdim} we solve for $s$ by first solving for $R^s$:
$$\sum_{k=1}^mc_k^s=1$$
$$4R^s+R^{2s}=1$$
\begin{align*}
R^s&=-2+\sqrt{5}\\
s\ln{R}&=\ln{-2+\sqrt{5}}\\
s&=\frac{\ln{-2+\sqrt{5}}}{\ln{R}}
\end{align*}
\end{proof}

\begin{corollary}\label{cor:angledim}
As the drawing angle, $\alpha$, goes to zero, the Hausdorff dimension of $\mathcal{F}_{\alpha}^{[i]}$
goes to one.
\end{corollary}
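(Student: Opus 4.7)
The plan is to take the formula for the Hausdorff dimension in Proposition \ref{prop:angleDimH} and directly evaluate the limit as $\alpha \to 0^+$, using continuity of $\cos$, $\sqrt{\ \cdot\ }$, and $\ln$ on the appropriate domains. The key observation is that in the limit, the constant appearing inside $\ln$ in the numerator exactly matches the value of $R$ appearing inside $\ln$ in the denominator, so the ratio collapses to $1$.

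Concretely, I would first use Theorem \ref{thm:angleScale} to write
\begin{equation*}
R(\alpha) = \bigl((1+\cos\alpha) + \sqrt{(1+\cos\alpha)^2 + 1}\bigr)^{-1},
\end{equation*}
and note that as $\alpha \to 0^+$ we have $1+\cos\alpha \to 2$, so that $R(\alpha)^{-1} \to 2 + \sqrt{5}$. Rationalizing gives $\lim_{\alpha\to 0^+} R(\alpha) = \tfrac{1}{2+\sqrt{5}} = \sqrt{5}-2$, which is a positive number strictly less than $1$.

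Now I would apply continuity of $\ln$ on $(0,\infty)$ to both the numerator and denominator of Proposition \ref{prop:angleDimH}. Since $\ln R(\alpha) \to \ln(\sqrt{5}-2) \neq 0$, the quotient rule for limits applies, and
\begin{equation*}
\lim_{\alpha\to 0^+} s(\alpha) = \frac{\ln(-2+\sqrt{5})}{\ln(\sqrt{5}-2)} = 1.
\end{equation*}

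There is essentially no obstacle here; the only thing to be mildly careful about is that $\ln(\sqrt{5}-2)$ is negative (since $\sqrt{5}-2 \approx 0.236 < 1$), so one should note that the limit of $\ln R$ is nonzero in order to justify the quotient, and that both the numerator and denominator are the \emph{same} negative quantity. The statement of the corollary is about the one-sided limit from the right, consistent with the degenerate case $\alpha = 0$ discussed immediately after Theorem \ref{thm:CurveIFSequiv}, where the fractal collapses to a line segment of dimension $1$, matching the limiting value computed above.
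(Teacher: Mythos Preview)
Your proposal is correct and follows essentially the same approach as the paper: substitute the expression for $R$ from Theorem~\ref{thm:angleScale} into the dimension formula of Proposition~\ref{prop:angleDimH}, let $\alpha \to 0^{+}$ so that $1+\cos\alpha \to 2$, and observe that the resulting quotient of logarithms equals $1$. Your version is slightly more explicit about continuity and the nonvanishing of the denominator, and you rationalize $(2+\sqrt{5})^{-1} = \sqrt{5}-2$ rather than leaving it as a reciprocal, but the argument is the same.
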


\begin{proof}
To show that the Hausdorff dimension of the curve as angle $\alpha$ changes, we observe the limit of the equation from Proposition \ref{prop:angleDimH} combined with the equation from Theorem \ref{thm:angleScale}:
$$\lim_{\alpha\to 0^+}\frac{\ln(-2+\sqrt{5})}{\ln\left(\left((1+\cos(\alpha)) +\sqrt{(1+\cos(\alpha))^{2} + 1}\right)^{-1}\right)}$$
As $\alpha$ goes to 0, the cosine factors go to 1, giving us the limit:
$$\lim_{\alpha\to 0^+}\frac{\ln(-2+\sqrt{5})}{\ln\left(\left(2 +\sqrt{5}\right)^{-1}\right)}=1.$$
\end{proof}

The last observation is that not only is the Hausdorff dimension of the Fibonacci fractals continuous in the drawing angle but also the fractals themselves. The continuity of the fractals as $\alpha$ grows above $\pi/2$ is conjectured but in that case the IFS has many overlaps that the continuity of the dimension function is not obvious. 

\begin{theorem}\label{thm:FracConverge}
The function that maps $\alpha \in [0,\pi/2]$ to the fractal $\mathcal{F}_{\alpha}^{[i]}$ for any $i \ge 2$ is continuous from above at $\alpha=0$ as a map from the interval into the Hausdorff metric space.
\end{theorem}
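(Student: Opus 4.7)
The plan is to bound $d_{\mathcal{H}}(\mathcal{F}_{\alpha}^{[i]}, \mathcal{F}_{0}^{[i]})$ explicitly by a quantity that vanishes as $\alpha \to 0^+$, using the bounding trapezoid from Theorem \ref{thm:CurveIFSequiv} together with the aspect-ratio limit from Proposition \ref{prop:AspectRatio}. First I would recall that, as discussed in the paragraph following Theorem \ref{thm:CurveIFSequiv}, $\mathcal{F}_{0}^{[i]}$ is the line segment of length $\sqrt{2}$ joining the initial and terminal points that remain fixed by the scaling procedure of Theorem \ref{thm:CurveLimitExist}; after a convenient rigid motion this is the segment $[0,\sqrt{2}]\times\{0\}$.

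Next I would note that for $\alpha \in (0,\pi/2]$, Theorem \ref{thm:CurveIFSequiv} identifies $\mathcal{F}_{\alpha}^{[i]}$ as the attractor of the IFS whose invariant trapezoid $V$ appears in Figure \ref{fig:IFS}, so $\mathcal{F}_{\alpha}^{[i]} \subset \overline{V}$. Let $H(\alpha)$ denote the height of $\overline{V}$ measured perpendicularly to its longer base. Using the aspect-ratio computation of Proposition \ref{prop:AspectRatio} with the base length normalized to $\sqrt{2}$, one obtains
\[
H(\alpha) \;=\; \frac{\sqrt{2}\,\sin\alpha}{r_+(\alpha)-1}, \qquad r_+(\alpha) \;=\; 1+\cos\alpha + \sqrt{2\cos\alpha},
\]
and $H(\alpha) \to 0$ as $\alpha \to 0^+$, since $\sin\alpha \to 0$ while $r_+(\alpha)-1 \to 1+\sqrt{2}$.

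The Hausdorff-distance bound then splits into two directions. Containment in $\overline{V}$ shows immediately that every point of $\mathcal{F}_{\alpha}^{[i]}$ is within distance $H(\alpha)$ of $\mathcal{F}_{0}^{[i]}$. For the reverse direction I would use that $\mathcal{F}_{\alpha}^{[i]}$ is the Hausdorff limit of the connected polygonal curves $\mathcal{F}_{6k+5}^{[i]}$ (or $\mathcal{F}_{6k+3}^{[i]}$), so it is compact and connected, and it contains both prescribed endpoints $(0,0)$ and $(\sqrt{2},0)$ because these points are preserved by the rescaling in Theorem \ref{thm:CurveLimitExist}. Any connected planar continuum meeting both endpoints while lying in the horizontal strip of height $H(\alpha)$ must project surjectively onto $[0,\sqrt{2}]$ (the image of a connected set under a continuous map is connected, hence an interval). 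Therefore every point of $\mathcal{F}_{0}^{[i]}$ is within distance $H(\alpha)$ of $\mathcal{F}_{\alpha}^{[i]}$, and combining the two directions yields $d_{\mathcal{H}}(\mathcal{F}_{\alpha}^{[i]}, \mathcal{F}_{0}^{[i]}) \to 0$.

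The step I expect to require the most care is the connectedness of $\mathcal{F}_{\alpha}^{[i]}$ and the survival of the two endpoints through the scaling limit, since the projection argument relies on both. Connectedness of a Hausdorff limit of connected compacta is standard (see \cite[Section 2.10]{Federer1969}), and the endpoints are literally fixed throughout the rescaling in Theorem \ref{thm:CurveLimitExist}, so once these facts are spelled out the remainder of the proof reduces to the explicit formula for $H(\alpha)$ obtained from Proposition \ref{prop:AspectRatio} and a triangle-inequality estimate on $d_{\mathcal{H}}$.
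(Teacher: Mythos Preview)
Your argument is correct and takes a genuinely different route from the paper's. The paper argues indirectly: it fixes the two-point set $V_0^{\alpha}=\{(0,0),(\sqrt{2}\sin\alpha,\sqrt{2}\cos\alpha)\}$, notes that each IFS iterate $V_k^{\alpha}$ depends continuously on $\alpha$ (because the similarities themselves do), and that $V_k^{\alpha}\to\mathcal{F}_{\alpha}^{[i]}$ in the Hausdorff metric as $k\to\infty$. You instead produce an explicit bound on $d_{\mathcal{H}}(\mathcal{F}_{\alpha}^{[i]},\mathcal{F}_{0}^{[i]})$ by trapping the attractor in the closed bounding trapezoid of height $H(\alpha)\asymp\sin\alpha$ and then using connectedness of the Hausdorff limit together with a projection argument to handle the reverse direction. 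Your approach is more elementary and yields a quantitative rate of convergence; the paper's approach, once one supplies the uniform-in-$\alpha$ estimate needed to pass from continuity of each $V_k^{\alpha}$ to continuity of the limit, would more naturally extend to continuity on all of $(0,\pi/2]$ rather than only at the endpoint.

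Two small points to tighten. First, in the paper's normalization the terminal vertex sits at $(\sqrt{2}\sin\alpha,\sqrt{2}\cos\alpha)$, not at $(\sqrt{2},0)$; so either run your projection argument along the line through the actual endpoints and finish with the triangle inequality via $d_{\mathcal{H}}\big([0,P(\alpha)],[0,P(0)]\big)\le|P(\alpha)-P(0)|\to0$, or state explicitly that you rotate so that the endpoints are $\alpha$-independent (harmless, since rotations are Hausdorff-metric isometries depending continuously on $\alpha$). Second, the formula $r_+=1+\cos\alpha+\sqrt{2\cos\alpha}$ reproduces a typo from the proof of Proposition~\ref{prop:AspectRatio}; the positive root of $r^{2}-2(1+\cos\alpha)r-1=0$ is $r_+=1+\cos\alpha+\sqrt{(1+\cos\alpha)^2+1}$, so $r_+-1\to 1+\sqrt{5}$ rather than $1+\sqrt{2}$. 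Either limit is nonzero, so your conclusion $H(\alpha)\to0$ stands.
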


\begin{proof}
Consider the points $(0,0)$ and $(\sqrt{2}\sin(\alpha),\sqrt{2}\cos(\alpha))$, denote them by the set $V_0^{\alpha}$. These are the first and last points mentions in the proof of Theorem \ref{thm:CurveLimitExist}. By Theorem \ref{thm:CurveIFSequiv} $\mathcal{F}_{\alpha}^{[i]}$ can be represented by the IFS instead of the scaling limit construction. Let $V_{k}^{\alpha}$ be the image of $V_0^{\alpha}$ under $k$ many applications of the iterated function system. As $k$ goes to infinity the distance between $V_k^{\alpha}$ and $\mathcal{F}_{\alpha}^{[i]}$ goes to zero.  It thus suffices to show continuity in the Hausdorff metric of $V_k^{\alpha}$ as a function of $\alpha$ for all $k$. 

Consider the set $V_0^{\alpha}$. As a non-empty compact subset of $\mathbb{R}^{2}$ it is clearly a Hausdorff-metric continuous function of $\alpha$. Since self-similar IFSs are Hausdorff-metric continuous functions themselves we have that $V_k^{\alpha}$ depends on $\alpha$ continuously as it is a composition of continuous maps. 
\end{proof}

\bibliography{FibFrac}{}
\bibliographystyle{amsplain}

\end{document}